\documentclass[a4paper,12pt,fleqn,leqno]{article}

\usepackage{amssymb,amsmath,amsthm,amsfonts}
\usepackage{tikz}
\usepackage[colorlinks=true,allcolors=blue]{hyperref}
\usepackage[verification]{struktex}

\newcommand {\<}{{\fontencoding{T1}\selectfont<}}

\newcommand {\bx} {{\pmb{x}}}
\newcommand {\C} {{\mathbb C}}
\newcommand {\card} {\#}
\newcommand {\cat}{\;\!{}_{{}^{{}^{{}^\circ}}}}

\newcommand {\cP} {{\cal P}}
\newcommand {\cS} {{\cal S}}

\newcommand {\comment}[1] {}
\newcommand {\content}[1] {{|#1|}}
\newcommand {\dder}{{\textstyle\SquareDel}}
\newcommand {\dint}{{\textstyle\SquareInt}}
\newcommand {\df}[1] {{\bfseries #1}}
\newcommand {\qt}[1] {``#1''}

\newcommand {\enum}[2] {\{#1,\dots,#2\}}

\newcommand {\FZ} {{F\Z}}
\makeatletter\newcommand{\hathat}[1]{% 
\begingroup%
  \let\macc@kerna\z@%
  \let\macc@kernb\z@%
  \let\macc@nucleus\@empty%
  {\hat{\raisebox{.3ex}{\vphantom{\ensuremath{#1}}}\smash{\hat{#1}}}}%
\endgroup%
}\makeatother
\newcommand {\gol}{\gamma}
\newcommand {\Gol}{\mathit{\Gamma}}
\newcommand {\id}{\mathrm{id}}
\newcommand {\into} {\hookrightarrow}
\newcommand {\la} {\lambda}
\newcommand {\len}[1] {{\#{#1}}}

\newcommand {\lev}{\ell}
\newcommand {\Lev}{L}
\newcommand {\longrev}[1] {{\overleftarrow{#1}}}
\newcommand {\mi} {\text{-}}
\newcommand {\minus} {\text{\!\scalebox{0.7}{$-$}}}
\newcommand {\N} {\mathbb N}
\newcommand {\OEIS}[1] {\href{https://oeis.org/A#1}{A#1}}
\newcommand {\oeitem}[1] {\item[\OEIS{#1}]}

\newcommand {\ot} {\leftarrow}
\newcommand {\ov}[1] {{\overline{#1}}} 

\newcommand {\ph} {\varphi}

\newcommand {\Q} {{\mathbb Q}}

\newcommand {\rest}[1] {|_{#1}}
\newcommand{\cev}[1]{{\reflectbox{\ensuremath{\vec{\reflectbox{\ensuremath{#1}}}}}}}
\newcommand {\rev}[1] {{\cev{#1}}}

\newcommand {\set}[2] {{\{ #1 \sothat #2 \}}}

\newcommand {\sm}{\setminus}
\newcommand {\sothat} {\,:\,}

\newcommand {\threecol}[3]{{\left(\!\begin{smallmatrix}#1\\#2\\#3\end{smallmatrix}\!\right)}}
\newcommand {\ti}[1] {{\tilde{#1}}}

\newcommand {\tn}[1] {\tiny$\scriptstyle#1$}
\newcommand {\thru}[3] {#1_{#2},\dots,#1_{#3}}

\newcommand {\Ver}[1]{{V_\minus^{#1}}}
\newcommand {\Z} {\mathbb Z}

\DeclareMathOperator{\Grp}{{\bf Grp}}

\DeclareMathOperator{\Set}{{\bf Set}}

\DeclareMathOperator*{\SquareInt}{\tikz[baseline=0.25em]{\draw[line width=0.06em]
  (-0.2em,0.1em) |- (0,0em) -- (0,1em) -| (0.2em,.9em);}\:\!\!}
\DeclareMathOperator*{\SquareDel}{\tikz{\draw[line width=0.06em]
  (0.2em,0.3em) -| (-0.2em,0em) -| (0.2em,0.6em) -| (-0.1em,.5em);}}

\newcounter{equivalence}
\renewcommand{\theequivalence}{(\roman{equivalence})}

\newcounter{substate}
\renewcommand{\thesubstate}{(\alph{substate})}
\newenvironment{substate}
{
  \begin{list}{\bf\thesubstate}
    {\usecounter{substate}
     \itemindent0em
     \settowidth\labelwidth{\bf(g)} \labelsep0.5em  
     \leftmargin\labelwidth \addtolength\leftmargin\labelsep
     \topsep0.5ex
     \itemsep0ex}
}
{\end{list}}

\newenvironment {subproof}
{
  \begin{list}{\bf\thesubstate}
    {\usecounter{substate} 
     \leftmargin0em 
     \settowidth\labelwidth{\bf(a)} \labelsep0.5em  
     \itemindent\labelwidth \addtolength\itemindent\labelsep
     \topsep0.5ex
     \itemsep0ex}
}
{\end{list}}

\newenvironment{OEISentries}
{\section*{OEIS Entries}\label{sec-oeis}
  \begin{list}{\bf\thesubstate}
    {\leftmargin0em 
     \settowidth\labelwidth{A012345} \labelsep0.5em  
     \leftmargin\labelwidth \addtolength\leftmargin\labelsep
     \topsep0.5ex
     \itemsep0ex}
}
{\end{list}}

\swapnumbers
\newtheorem{thm}{Theorem}[section]
\newtheorem{rmk}[thm]{Remark}
\newtheorem{lem}[thm]{Lemma}
\newtheorem{prp}[thm]{Proposition}
\newtheorem{cor}[thm]{Corollary}
\newtheorem{cnj}[thm]{Conjecture}

\theoremstyle{definition}

\title{Golombic and Levine sequences}
\author{Johan Claes, Roland Miyamoto}

\begin{document}

\maketitle

\begin{abstract}
\noindent
We investigate and generalise Levine sequences
like \OEIS{011784}, \OEIS{061892} and \OEIS{061894}
and develop an algebraic theory for them.
We thereby also cover other fast growing sequences like~\OEIS{014644},
which we call \qt{golombic} due to their strong ties with
Golomb's sequence~\OEIS{001462}.\footnote{%
  Every
  \href{https://oeis.org/wiki/Style_Sheet\#A-number}{A-number}
  links to its corresponding entry in
  \href{https://oeis.org}
  {The On-Line Encyclopedia of Integer Sequences}~\cite{OEIS}
  (OEIS).
  For convenient off-line reading,
  we list all used A-numbers
  before the References,
  each followed by its official
  \href{https://oeis.org/wiki/Style_Sheet\#Name}{OEIS Name}.%
}\\[1ex]
{\it 2020 MSC:} 11Y55 (primary), 05A05, 05A10, 13F20 (secondary)
\end{abstract}

\addtocounter{section}{-1}

\section{Introduction}

Three years after Andrew Wiles
had fully proved Fermat's conjecture,
Richard Guy gave his answer to the question \qt{What's left?}
in a short essay~\cite{Gu1} of the same title.
He starts by presenting three \qt{simple arithmetic}
---albeit hard--- problems.
The third of these problems concerns the sequence
\begin{equation}\label{eqn-lev2}
    \lev(2) = \OEIS{011784}
    = (1,2,2,4,7,14,42,213,2837,175450,139759600,6837625106787,\dotsc),
\end{equation}
which Lionel Levine suggested to Neil Sloane in 1997.
It exhibits doubly-exponential growth
and was entered in the OEIS~\cite{OEIS} in 1999.
According to Guy~\cite{Gu1}, \cite[p.~347]{Gu2},
Sloane was wondering
whether its 20th term would ever be calculated.
Sloane listed Levine's sequence~\eqref{eqn-lev2} in~\cite{Sl}
as one of his \qt{favorite integer sequences}
and reiterated his interest in it
in an enthusiastic Numberphile video~\cite{SlHa}
a few years ago.

In May 2025, a single PARI~\cite{PARI} thread
programmed by the authors
calculated the said 20th term,
a 358-digit number, within two hours
when running on an old laptop computer.
The present paper develops the algebraic theory
that underlies this breakthrough.
One main ingredient are certain
integer-valued polynomials $T_n\in\Q[\thru x1n]$
that Vardi~\cite{Va} introduced and used to calculate
far out terms of Golomb's sequence~\eqref{eqn-golomb}.
As explained in Theorem~\ref{thm-gol},
Vardi's poly\-nomials can also be used to efficiently calculate
sequences like~\eqref{eqn-gol2},
which we call \qt{golombic}
owing to their intimate relation with Golomb's sequence.

Every golombic sequence arises from its underlying \qt{golombic triangle},
like~\eqref{eqn-gol2} arises from~\eqref{eqn-gol2triangle}.
Each row of such a triangle is \qt{deployed} (cf.~\cite[p.~7]{Br})
from the previous row by the so-called \qt{golombic operator} $\Gol$.
While these rows traditionally contain only non-negative integers,
Theorem~\ref{thm-Golz} extends the golombic deployment process to 
\qt{words} in the free group $\FZ$ over the integers.

An arbitrary \df{Levine sequence}
(in generalisation of Levine's original sequence~\eqref{eqn-lev2} above)
is produced quite similarly,
only that the rows (words) are reversed between applications of $\Gol$.
Reversing is one of eight natural \mbox{(anti-)involutions} of the free group $\FZ$.
Exploiting these \mbox{(anti-)involutions} and studying
their interplay with the golombic operator (Lemma~\ref{lem-Golinvo})
leads to an iterative formula (Theorem~\ref{thm-lev})
based on Vardi's polynomials $T_n$
for calculating large terms of an arbitrary Levine sequence.
Apart from their algorithmic merit,
Theorems~\ref{thm-gol} and~\ref{thm-lev} also shed some more light
on the \qt{vague similarity} between Golomb's and Levine's sequence
noticed by Guy~\cite[p.~347]{Gu2}.

\section{Integer sequences, golombic and Levine operator}\label{sec-intseq}
An \df{integer sequence} (based at start index $1$)
is a map $a\colon\N\to\Z$;
its \df{length} $\len a$ is the \qt{last} index $n$
for which the term $a(n)$ is non-zero,
more precisely,
\[
    \len a := \sup(\{0\}\cup\set{n\in\N}{a(n)\neq0})
    \; \in \; \N_0\cup\{\infty\},
\]
and we call $a$ \df{finite} or a \df{tuple} if $\len a<\infty$.
For some convenience later on,
we equip the set $\Z^\N$ of all integer sequences
with the group operation $+$ of componentwise addition,
and we will also consider its subgroup of finite sequences
as well as its two submonoids of (finite) non-negative
integer sequences,
\[
    \Z^{(\N)} = \set{a\in\Z^\N}{\len{a}<\infty},
    \quad \N_0^\N
    \quad\text{and}\quad
    \N_0^{(\N)} = \N_0^\N \cap \Z^{(\N)}.
\]
The group homomorphism
$\content{\ }\colon(\Z^{(\N)},+)\to(\Z,+)$
that maps a tuple $a$ to its \df{content}
\[
    \content a := \sum_{n=0}^{\len a}a(n)
\]
obviously restricts to a monoid homomorphism
$\content{\ }\colon(\N_0^{(\N)},+)\to(\N_0,+)$.
A sequence or tuple $a$ is sometimes written
as a parenthesised, comma-separated list of its terms,
that is, $a = (a(1),a(2),\dotsc)$,
where trailing zeros may be omitted.
For example, $(0,0,1,0,0,\dotsc)=(0,0,1,0)=(0,0,1)$
denotes the sequence $a$ with $a(3)=1$ and $a(n)=0$ for $3\neq n\in\N$.
Another useful notation is the \df{left verschiebung} operator
\[
    \Ver{} \colon \Z^\N \to \Z^\N, \quad
    a \mapsto \Ver{}a := (a(2),a(3),\dotsc)
\]
and its powers $\Ver m$ defined by $(\Ver ma)(n)=a(m+n)$
for $a\in\Z^\N$, $m\in\N_0$, $n\in\N$.
Our main attention rests on the operator
$\Gol \colon \N_0^{(\N)} \to \N_0^{(\N)}$
which maps the tuple $a\in \N_0^{(\N)}$ to the tuple
\[
  \Gol a := \left(1^{a(1)}2^{a(2)}3^{a(3)}\!\dotsc\right)
          = \left(1^{a(1)}\!\dotsc(\len a)^{a(\len a)}\right)
          \in \N_0^{(\N)},
\]
meaning that $\Gol a$ consists of:
first $a(1)$ copies of the integer $1$,
then $a(2)$ copies of the integer $2$, and so on,
or, more formally,
\[
    (\Gol a)(n) = \begin{cases}
    k \in \enum1{\len a} & \text{if }
      \sum_{j=1}^{k-1}a(j)<n\leq\sum_{j=1}^ka(j),\\
    0 & \text{if } n>\content a
    \end{cases}
\]
for $n\in\N$.
We observe that $\Gol$ maps the first $n\in\N$ terms of
\df{Golomb's sequence}
\begin{gather}\label{eqn-golomb}
    \OEIS{001462}
      = (1,2,2,3,3,4,4,4,5,5,5,6,6,6,6,7,7,7,7,8,8,8,8,9,9,9,9,9,10,\dotsc)
\end{gather}
to its first $\OEIS{001463}(n)$ terms,
so that $\Gol$ produces arbitrarily many initial terms
of Golomb's sequence by repeated application to, in fact,
any tuple $a\in \N_0^{(\N)}$ other than $()$ or $(1)$.
We therefore call $\Gol$ the \df{golombic operator}.
Let us write $\Gol^n$ for $n$-fold application of $\Gol$.
Starting from the tuple $a=(2)$, we obtain the \qt{triangle}
\begin{align}
\Gol^0(2) &= (2),
          \nonumber\\
\Gol^1(2) &= (1^2)
           = (1,1),
          \nonumber\\
\Gol^2(2) &= (1^1 2^1)
           = (1,2),
          \nonumber\\
\Gol^3(2) &= (1^1 2^2)
           = (1,2,2),
          \label{eqn-gol2triangle}\\
\Gol^4(2) &= (1^1 2^2 3^2)
           = (1,2,2,3,3),
          \nonumber\\
\Gol^5(2) &= (1^1 2^2 3^2 4^3 5^3)
           = (1,2,2,3,3,4,4,4,5,5,5),
          \nonumber\\
\Gol^6(2) &= (1^1 2^2 3^2 4^3 5^3 6^4 7^4 8^4 9^5 10^5 11^5)
           = (1,2,2,3,3,4,\dotsc,
                   10,11,11,11,11,11),\dotsc
          \nonumber
\end{align}
which, except for its first row, is given by \OEIS{014643}.
An apparent and remarkable property of these rows
is that the length of any one row
equals the content of the previous row and also
the la(rge)st number in the following row.
Formally,
\begin{gather}\label{eqn-lenGol}
    \content a = \len\Gol a,
    \quad
    \len a = (\Gol a)(\content a)
    \quad\text{and}\quad
    \content{\Gol a} = \sum_{n=1}^{\len a}n\cdot a(n)
    \quad\text{for every } a \in \N_0^{(\N)}.
\end{gather}
In further pursuit of these ideas,
we introduce the \df{$n$-th golombic number}
of a given tuple $a\in \N_0^{(\N)}$ as
\begin{gather}\label{eqn-golna}
    \gol_na := \len \Gol^{n-1}a
    \quad\text{for }n\in\N
\end{gather}
and call $\gol a := (\gol_na)_{n\in\N}$
the \df{golombic sequence based at} $a$.
The probably most prominent and canonical example is
the golombic sequence
\begin{gather}\label{eqn-gol2}
    \gol(2) = \OEIS{014644}
    = (1,2,2,3,5,11,38,272,6474, 1090483, 4363282578,\dotsc)
\end{gather}
based at $(2)$,
arising from the above \qt{triangle}~\eqref{eqn-gol2triangle}.
Its growth is doubly exponential,
which is probably why only its first 15 terms
have been calculated so far.
The theory and methods developed in this paper
will enable us to compute four more terms,
the last of which has 444 decimal places.

A variation of the golombic operator and sequences
is given by Lionel Levine's construction
which reverses the tuple between applications of $\Gol$.
Denoting the \df{reverse} of a given tuple $a\in\Z^{(\N)}$ by
\[
  \rev a = (a(\len a),\dotsc,a(1)),
\]
we define the \df{Levine operator}
\[
    \Lev\colon\N_0^{(\N)}\to\N_0^{(\N)},\quad
    a \mapsto \longrev{\Gol a}
\]
and write $\Lev^n$ for its $n$-fold application.
Starting from the tuple $a=(2)$ leads to Levine's \qt{triangle}
\begin{align}
\Lev^0(2) &= (2),
          \nonumber\\
\Lev^1(2) &= (2^1)
           = (1,1),
          \nonumber\\
\Lev^2(2) &= (2^1 1^1)
           = (2,1),
          \nonumber\\
\Lev^3(2) &= (2^1 1^2)
           = (2,1,1),
          \label{eqn-lev2triangle}\\
\Lev^4(2) &= (3^1 2^1 1^2)
           = (3,2,1,1),
          \nonumber\\
\Lev^5(2) &= (4^1 3^1 2^2 1^3)
           = (4,3,2,2,1,1,1),
          \nonumber\\
\Lev^6(2) &= (7^1 6^1 5^1 4^2 3^2 2^3 1^4)
           = (7,6,5,4,4,3,3,2,2,2,1,1,1,1),\nonumber\\
\Lev^7(2) &= (14^1 13^1 12^1 11^1 10^2 9^2 8^2 7^3 6^3 5^4 4^4 3^5 2^6 1^7)
           = (14,13,12,11,10,10,9,9,\dotsc),\,\dotsc
          \nonumber
\end{align}
In contrast to the presentations in~\cite{Gu1},
\cite[p.~347]{Gu2},
\OEIS{011784}, \OEIS{012257} and~\cite{Sl},
each row $\Lev^1(2),\Lev^2(2),\dotsc$
is a {\em decreasing} tuple
because that is one standard way of denoting
an (unordered integer) partition
and allows for an alternative description
in terms of the conjugate partition:
We recall (see~\cite[pp.~7f]{An})
that the \df{conjugate} of a \df{partition}
\[
    \la \in \cP := \set{a\in\N_0^{(\N)}}{a(1)\geq\dotsm\geq a(\len a)}
\]
is the partition $\la^*\in\cP$ given by
$\la^*(n) = \#\set{k\in\N}{\la(k)\geq n}$
for $n\in\N$, which moreover satisfies
\[
    \content{\la^*}=\content{\la}
    \quad\text{and}\quad
    \la^{**}=\la.
\]
The following alternative definition for $\Lev$
arises from a combination with the tail-sum operator
$S_\minus\colon\Z^{(\N)}\to\Z^{(\N)}$ given by
\[
    S_\minus a := a+\Ver{}a+\Ver2a+\Ver3a+\dotsm = \sum_{m=0}^{\len a-1}\Ver ma,
    \quad\text{that is},\quad
    (S_\minus a)(n) = \sum_{k=n}^{\len a}a(k)
\]
for $a\in\Z^{(\N)}$, $n\in\N$.

\begin{lem}
  Every tuple $a \in \N_0^{(\N)}$ satisfies
  $S_\minus a\in\cP$ and
  $\Lev a = (S_\minus a)^*$.
\end{lem}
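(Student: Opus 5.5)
The plan is to check both claims entry by entry, straight from the definitions. Write $m:=\len a$ and $s:=\content a$.

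For $S_\minus a\in\cP$, first note that $S_\minus a\in\N_0^{(\N)}$. Each entry $(S_\minus a)(n)=\sum_{k=n}^{m}a(k)$ is a sum of non-negative integers, and it vanishes for $n>m$. Monotonicity follows because
\[
    (S_\minus a)(n)-(S_\minus a)(n+1)=a(n)\geq0 .
\]
Hence $S_\minus a$ is a decreasing tuple, so it is a partition. Its length is $m$ if $a(m)\neq0$, and $0$ if $a=()$.

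For $\Lev a=(S_\minus a)^*$, I would compute both sides at an arbitrary index $n\in\N$. By \eqref{eqn-lenGol} we have $\len\Gol a=s$, so $\Lev a$ has length $s$ and
\[
    (\Lev a)(n)=(\Gol a)(s+1-n)\quad\text{for }1\le n\le s,
\]
while $(\Lev a)(n)=0$ for $n>s$. On the other side, $(S_\minus a)^*(n)=\#\set{k\in\N}{(S_\minus a)(k)\geq n}$. Since $S_\minus a$ is decreasing, this set is an initial segment $\enum1{K}$, where $K$ is the largest $k\le m$ with $\sum_{j=k}^m a(j)\ge n$, and $K=0$ if there is no such $k$.

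For $n>s$ no such $k$ exists, because $(S_\minus a)(1)=s$. Both sides are therefore $0$.

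For $1\le n\le s$, put $p:=s+1-n$, so that $p\in\enum1s$. By the formal definition of $\Gol$, $(\Gol a)(p)$ is the unique $k$ with
\[
    \sum_{j<k}a(j)<p\leq\sum_{j\le k}a(j).
\]
Subtracting both inequalities from $s+1$ gives
\[
    \sum_{j>k}a(j)<n\le\sum_{j\ge k}a(j),
\]
that is, $(S_\minus a)(k+1)<n\le(S_\minus a)(k)$. This says exactly that $k$ is the largest index with $(S_\minus a)(k)\ge n$. So $K=k$, and $(S_\minus a)^*(n)=K=(\Gol a)(p)=(\Lev a)(n)$, as required.

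The main obstacle is the bookkeeping in this last step: the index shift $p\leftrightarrow s+1-n$ coming from the reversal, and the translation of the strict versus non-strict inequalities. Zero entries $a(j)=0$ produce repeated values in $S_\minus a$, and one must confirm that "largest $k$" still picks out the unique block index. It does, because the unique $k$ from the definition of $\Gol$ automatically satisfies $a(k)>0$.
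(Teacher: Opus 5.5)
Your proof is correct and complete. Monotonicity of $S_\minus a$ is immediate from $(S_\minus a)(n)-(S_\minus a)(n+1)=a(n)\ge0$. The shift $p=s+1-n$ turns the defining inequalities for $(\Gol a)(p)$ into $(S_\minus a)(k+1)<n\le(S_\minus a)(k)$; this silently uses integrality, which is why subtracting from $s+1$ rather than $s$ flips strict and non-strict correctly. Given monotonicity, that pins down $(S_\minus a)^*(n)=k$.

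The paper gives no proof to compare with: the lemma is explicitly left as an exercise and never used afterwards. If you want a shorter route that avoids the inequality bookkeeping, compute the conjugate of the other side. $\Lev a$ is the reverse of the nondecreasing tuple $\Gol a$, so $\Lev a\in\cP$, and it has exactly $a(j)$ parts equal to $j$. Hence $(\Lev a)^*(k)=\sum_{j\ge k}a(j)=(S_\minus a)(k)$ for all $k$. This shows at once that $S_\minus a$ is a partition, being a conjugate, and then $\la^{**}=\la$ gives $\Lev a=(\Lev a)^{**}=(S_\minus a)^*$. Your entrywise argument is more explicit and self-contained; the conjugate-first argument gets both claims from one count plus the involutivity of conjugation.
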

We will not make use of this lemma and leave the proof as an exercise.
For a given tuple $a \in \N_0^{(\N)}$, we obtain
\[
    \content a = \len\Lev a,
    \quad
    \len a = (\Lev a)(1)
    \quad\text{and}\quad
    \content{\Lev a} = \sum_{n=1}^{\len a}n\cdot a(n)
\]
in analogy with~\eqref{eqn-lenGol}
and introduce the \df{$n$-th Levine number}
\begin{gather}\label{eqn-levna}
    \lev_na := \len \Lev^{n-1}a
    \quad\text{for }n\in\N
\end{gather}
as well as the \df{Levine sequence} $\lev a:=(\lev_na)_{n\in\N}$
\df{based at} $a$.
The most prominent example is Levine's original sequence~\eqref{eqn-lev2}.
Until recently, its last verified term has been $\lev_{18}(2)$,
a 137-digit number contributed by Johan Claes in 2008 (see~\OEIS{011784}).
In 2011, Claes also calculated a candidate for $\lev_{19}(2)$,
based on certain modular assumptions.
By means of Theorem~\ref{thm-lev}, we can now confirm this candidate
and calculate the long-sought twentieth term $\lev_{20}(2)$.
Two other Levine sequences
\begin{align*}
    \lev(0,0,1) &= 
    (3, 1, 3, 3, 6, 10, 28, 108, 1011, 32511, 9314238, 84560776390,\dotsc),\\
    \lev(0,2) &=
    (2, 2, 4, 6, 13, 35, 171, 1934, 97151, 52942129, 1435382350480,\dotsc),
\end{align*}
can be found in the OEIS as~\OEIS{061892} and \OEIS{061894}.
To these we will also add more terms.

\section{Discrete calculus for functionals}\label{sec-calculus}

We need to develop some discrete calculus
for $\Q$-valued maps $f\colon\Z^\N\to\Q$, $a\mapsto fa$,
which we call \df{functionals} (although they need not be linear).
Under pointwise addition and multiplication,
they obviously form a commutative ring $(\Q^{\Z^\N},+,\cdot)$.
As that ring's basic constituents we introduce the \df{coordinate functionals}
\[
    x_n \colon \Z^\N \to \Z,
    \quad
    a \mapsto x_n a := a(n)
\]
for $n\in\N$.
From given functionals $f\colon\Z^\N\to\Q$
and $g_1,g_2,g_3,\dotsc\colon\Z^\N\to\Z$
we can form the composed functional
$f(g_1,g_2,g_3,\dotsc)\colon\Z^\N\to\Q$,
$a\mapsto f(g_1a,g_2a,g_3a,\dotsc)$,
in particular, we trivially have $f=f(x_1,x_2,x_3,\dotsc)$.
Let us now introduce the discrete partial \df{derivative} $\dder_1f$
and \df{integral} $\dint_1f$ with respect to $x_1$
of a given functional $f\colon\Z^\N\to\Q$
by setting
\begin{align*}
    (\dder_1f)a &:= f\bigl(a+(1)\bigr) - fa,\\
    (\dint_1f)a &:= \begin{cases}
     \sum_{c=1}^{a(1)}f\bigl(a-(c)\bigr) & \text{if }a(1)>0,\\
     0 & \text{if }a(1)=0,\\
     -\sum_{c=0}^{-a(1)-1}f\bigl(a+(c)\bigr) & \text{if }a(1)<0
    \end{cases}
\end{align*}
for $a \in \Z^\N$.
One could define $\dint_k$ and $\dder_k$
for arbitrary $k\in\N$ in the same way
but in this treatise we will only need the two operators
$\dint_1, \dder_1 \colon \Q^{\Z^\N} \to \Q^{\Z^\N}$
and prove that they are inverse to each other
and linear in the following sense.
\begin{lem}\label{lem-derint}
    For all $f,g\in\Q^{\Z^\N}$,
    the following statements hold.
    \begin{substate}
    \item $\dder_1\dint_1f=f$.\label{sub-derint}
    \item $\dint_1\dder_1f = f - f(0,x_2,x_3,\dotsc)$.\label{sub-intder}
    \item $\dder_1(f\pm g) = \dder_1f \pm \dder_1g$.\label{sub-dersum}
    \item $\dint_1(f\pm g) = \dint_1f \pm \dint_1g$.\label{sub-intsum}
    \item If $\dder_1f=0$, then $\dder_1fg = f\cdot\dder_1g$.
        \label{sub-fderg}
    \item If $\dder_1f=0$, then $\dint_1fg = f\cdot\dint_1g$.
        \label{sub-fintg}
    \end{substate}
\end{lem}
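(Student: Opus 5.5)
The plan is to reduce everything to one-variable discrete calculus. Fix $a\in\Z^\N$ and write $a=(t)+b$ with $t=a(1)$ and $b(1)=0$. For a functional $f$ put $\phi(t):=f\bigl(b+(t)\bigr)$, a map $\Z\to\Q$ depending on $b$. Then $(\dder_1f)a=\phi(t+1)-\phi(t)$. The three cases in the definition of $\dint_1$ collapse into the single signed-sum formula
\[
    (\dint_1f)a=\Phi(t),\qquad \Phi(t):=\sum_{s=0}^{t-1}\phi(s)\ \text{ for } t\geq0,\qquad \Phi(t):=-\sum_{s=t}^{-1}\phi(s)\ \text{ for } t<0.
\]
To see this, substitute $s=t-c$ for $t>0$ (so $c=1,\dots,t$ gives $s=t-1,\dots,0$), and $s=t+c$ for $t<0$ (so $c=0,\dots,-t-1$ gives $s=t,\dots,-1$). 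Doing this reindexing carefully is the only real care point; I will check it first.

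With this formula, \ref{sub-derint} amounts to showing $\Phi(t+1)-\Phi(t)=\phi(t)$ for all $t\in\Z$. I will treat $t\geq0$, $t\leq-2$, and the boundary case $t=-1$, where $\Phi(0)=0$ and $\Phi(-1)=-\phi(-1)$, separately. Equivalently, I can note that $\Phi$ is the unique function with $\Phi(0)=0$ and $\Phi(t+1)-\Phi(t)=\phi(t)$.

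For \ref{sub-intder}, apply the formula to the function $\psi(t)=\phi(t+1)-\phi(t)$. The sums telescope to $\phi(t)-\phi(0)$ in both sign cases, and $\phi(0)=f(0,a(2),a(3),\dots)$, which is exactly the claim.

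Parts \ref{sub-dersum} and \ref{sub-intsum} are immediate, since both operators are finite linear combinations of shifted evaluations of $f$ with coefficients depending only on $a(1)$.

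For \ref{sub-fderg} and \ref{sub-fintg}, the hypothesis $\dder_1f=0$ means that $f(a+(1))=fa$ for all $a$. Applying this to $a-(1)$ also gives $f(a-(1))=fa$, so by induction $f(a\pm(c))=fa$ for every $c\in\N_0$; that is, $f$ does not depend on $x_1$. Part \ref{sub-fderg} then follows from $f(a+(1))g(a+(1))-fa\,ga=fa\bigl(g(a+(1))-ga\bigr)$. For \ref{sub-fintg}, every summand $f(a\mp(c))g(a\mp(c))$ in the definition of $\dint_1$ equals $fa\cdot g(a\mp(c))$, so $fa$ factors out of the sum in each case.
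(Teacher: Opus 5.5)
Your proof is correct and follows essentially the same route as the paper: telescoping computations split by the sign of $a(1)$ for (a) and (b), linearity for (c)--(d), and pulling out $f$ via $f(a+(c))=fa$ for (e)--(f). Your one-variable signed-sum $\Phi$ is just a tidier packaging of the paper's case analysis.
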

\begin{proof}
Let $a\in\Z^\N$ and $f,g \in \Q^{\Z^\N}$.
\begin{subproof}
\item Set $h:=\dint_1f$.\\
    If $a(1)\geq0$, then
    $(\dder_1h)a = h(a+(1))-ha
     = \sum_{c=1}^{a(1)+1}f(a+(1-c)) - \sum_{c=1}^{a(1)}f(a-(c))
     = f(a+(1-1))=fa$.\\
    If $a(1)<0$, then
    $(\dder_1h)a = h(a+(1))-ha
     = -\sum_{c=0}^{-a(1)-2}f(a+(1+c)) + \sum_{c=0}^{-a(1)-1}f(a+(c))
     = f(a+(0))=fa$.
\item If $a(1)\geq0$, then
    $(\dint_1\dder_1f)a = \sum_{c=1}^{a(1)}(\dder_1f)(a-(c))
      = \sum_{c=1}^{a(1)}(f(a-(c-1))-f(a-(c)))
      = f(a-(1-1))-f(a-(a(1)))
      = fa - f(0,x_2a,x_3a,\dotsc)$.\\
    If $a(1)<0$, then
    $(\dint_1\dder_1f)a = -\sum_{c=0}^{-a(1)-1}(\dder_1f)(a+(c))
      = \sum_{c=0}^{-a(1)-1}(f(a+(c))-f(a+(c+1)))
      = f(a+(0))-f(a+(-a(1)-1+1))
      = fa - f(0,x_2a,x_3a,\dotsc)$.
\item[\bf(c)--(d)] are immediate
    from the definitions of $\dder_1$ and $\dint_1$.
\item[\bf(e)--(f)] The assumption $\dder_1f=0$
    obviously implies $f(a+(c))=fa$ for every $c\in\Z$.
    Hence,
    $(\dder_1fg)a = f(a+(1))\cdot g(a+(1))- fa\cdot ga
                       = fa\cdot(\dder_1g)a$,
    which proves~\ref{sub-fderg}.
    In the same way, distinguishing the three cases $a(1)>0$, $a(1)=0$ and $a(1)<0$
    in the definition of $\dint_1$ proves~\ref{sub-fintg} 
    straightforwardly.
\qedhere
\end{subproof}
\end{proof}
The coordinate functionals $x_1,x_2,x_3,\dotsc$ are algebraically independent
(even over $\C$), and the (multivariate) polynomial ring
$\Q[\bx]=\Q[x_1,x_2,\dotsc]$ is a subring of $\Q^{\Z^\N}$,
where we abbreviate $\bx:=(x_1,x_2,\dotsc)$.
Recall that the multivariate powers
\[
  \bx^r := \prod_{j\in\N}x_j^{r(j)}
  \quad\text{for}\quad
  r \in \N_0^{(\N)}
\]
constitute a $\Q$-basis of $\Q[\bx]$.
We call a functional $f\colon\Z^\N\to\Q$ \df{integer-valued}
if all its values reside in $\Z$.
The integer-valued functionals in $\Q[\bx]$
will be referred to
as (multivariate) \df{integer-valued polynomials}.
To distinguish them among all polynomials,
we make use of traditional binomial expression:
For $k\in\N_0$ and any element $u$ in some
commutative ring~$A$ of characteristic $0$
(that is, the ring morphism $\Z\to A$ is injective),
the \qt{binomial power}
\[
    \binom uk := \prod_{j=1}^k\frac{u-j+1}{j}
               = \frac{u(u-1)\dotsm(u-k+1)}{k!}
\]
resides in the ring $\Q A = \N^{-1}A$ of fractions
with respect to the multiplicative monoid~$\N$
in the sense of~\cite[p.~37]{AtMc}.
We list some useful formulas.
\begin{lem}\label{lem-binom}
    Let $k,l\in\N_0$ and $u,v$ be elements in some commutative ring
    of characteristic~$0$. Then the following identities hold.
    \begin{substate}
    \item $\binom{u+v}{k}
        = \sum\limits_{j=0}^k\binom uj\binom v{k-j}$.
        \label{sub-binomu+vk}
    \item For $m,n\in\N_0$, let
        $\binom{m\times n}k$
        denote the number of left- and right-total relations
        $R$ between $\enum1m$ and $\enum1n$ such that $\card R=k$.
        Then $\binom{u\cdot v}{k}
        = \sum\binom{m\times n}k \binom um\binom vn$,
        where the sum is over all pairs $(m,n)\in\N_0^2$
        with $\max\{m,n\}\leq k\leq mn$.
        \label{sub-binomuvk}
    \item $\binom uk\binom ul
        = \sum\limits_{n=\max\{k,l\}}^{k+l}
          \frac{n!}{(n-k)!(n-l)!(k+l-n)!}\binom un$.
        \label{sub-binomukum}
    \item For $n\in\N_0$, let
        $\threecol nkl := \card\set{\cS}{
            \card\cS=l,\,
            \bigcup_{S\in\cS}S=\enum1n,\,
            \card S=k\text{ for all }S\in\cS}$
        be the number of coverings of $\enum1n$
        by $l$ distinct $k$-subsets.\\
        Then 
        $\displaystyle\binom{\binom uk}l
        = \sum_{n=\min\set{N\in\N_0}{l\leq\binom Nk}}^{k\cdot l}
          \threecol nkl \binom un$.
        \label{sub-binomumk}
    \end{substate}
\end{lem}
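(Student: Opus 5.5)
The plan is to prove all four identities first for non-negative integers $u=M$, $v=N$ by counting, and then lift them to an arbitrary commutative ring $A$ of characteristic $0$ by a polynomial-identity argument.

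\emph{Lifting step.} Each identity, read in $\Q[U,V]$ or $\Q[U]$ with indeterminates $U,V$, is an equality of polynomials, since $\binom Uk$ is a polynomial of degree $k$. Two polynomials in $\Q[U,V]$ agreeing on all of $\N_0^2$ are equal: fix $N$ and note that infinitely many roots force a one-variable polynomial to vanish, then repeat in the other variable. For \ref{sub-binomumk} the outer expression $\binom{\binom Uk}l$ is a composition of polynomials, so it is again in $\Q[U]$, and its evaluation at $M\in\N_0$ is the ordinary binomial coefficient $\binom{\binom Mk}l$. Since $\Q A$ is a $\Q$-algebra, the evaluation map $\Q[U,V]\to\Q A$ sending $U\mapsto u$, $V\mapsto v$ exists and respects $\binom{\cdot}{k}$. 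So every polynomial identity transfers to $u,v\in A$.

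\emph{Counting step.} For $u=M$, $v=N$ we argue as follows.
\begin{substate}
\item This is Vandermonde's identity: choose $k$ elements from $\enum1M\sqcup\enum1N$ and split according to the number $j$ that fall in the first part.
\item A $k$-subset $R$ of $\enum1M\times\enum1N$ has a projection onto the first factor with some image of size $m$, and a projection onto the second with image of size $n$. Choosing the two images gives $\binom Mm\binom Nn$ options. Given the images, the relation is a left- and right-total relation between them, and there are $\binom{m\times n}k$ of these after identifying the images order-preservingly with $\enum1m$ and $\enum1n$. Nonemptiness of the count forces $\max\{m,n\}\le k\le mn$, which covers the case $k=0$ with $m=n=0$.
\item A pair $(K,L)$ of a $k$-subset and an $l$-subset of $\enum1M$ is classified by its union $W$, of size $n$ with $\max\{k,l\}\le n\le k+l$. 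Within $W$, the sets $K\sm L$, $K\cap L$ and $L\sm K$ have sizes $n-l$, $k+l-n$ and $n-k$, which gives the multinomial $\frac{n!}{(n-k)!(n-l)!(k+l-n)!}$.
\item An $l$-set of $k$-subsets of $\enum1M$ is classified by its union $W$, of size $n$. It is then a covering of $W$ by $l$ distinct $k$-subsets, counted by $\threecol nkl$. The range of $n$ is forced by $l\le\binom nk$ and $n\le kl$; outside this range the coefficient vanishes anyway.
\end{substate}

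\emph{Main obstacle.} The only delicate point is the lifting in \ref{sub-binomumk}. One has to make sure that $\binom{\binom uk}l$ computed in $\Q A$ agrees with evaluating the composed polynomial. This holds because the binomial power is defined by the same rational polynomial expression in any $\Q$-algebra, and evaluation is a ring homomorphism. The combinatorial bijections themselves are routine.
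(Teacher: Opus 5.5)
Your proposal is correct and follows the same route as the paper: prove the identities for $u,v\in\N_0$, conclude that they hold as polynomial identities in $\Q[x_1,x_2]$ because $\N_0$ is infinite, and then substitute arbitrary $u,v$. You also supply the combinatorial counting arguments for the non-negative integer case, which the paper leaves implicit.
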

\begin{proof}
    Because $\N_0$ is infinite and all four identities
    hold for $u,v\in\N_0$,
    they hold for $u=x_1$, $v=x_2$, 
    in which case
    both sides of each equation
    reside in the polynomial ring $\Q[x_1,x_2]$.
    The identities then follow for arbitrary $u$, $v$
    by substitution.
\end{proof}

For each $r\in \N_0^{(\N)}$,
we can now consider the \df{multivariate binomial power}
\[
    \binom\bx r:=\prod_{j\in\N}\binom{x_j}{r(j)} \in \Q[\bx].
\]
From~\cite[p.~290]{CaCh} and~\cite{Os},
we distil the multivariate version
of a theorem by Pólya~\cite{Po}.

\begin{prp}\label{prp-mivpoly}
The multivariate binomial powers $\binom \bx r$ for $r\in \N_0^{(\N)}$
constitute a $\Q$-basis of $\Q[\bx]$.
The integer-valued polynomials are exactly
the $\Z$-linear combinations of these basis vectors.
\end{prp}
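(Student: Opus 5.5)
The plan has three parts: show that the binomial powers form a $\Q$-basis, show that their $\Z$-combinations are integer-valued, and prove the converse by Newton interpolation with induction on the number of variables.

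\emph{Basis.} Each $\binom{x_j}{k}$ is a polynomial in $x_j$ of degree exactly $k$ with leading coefficient $1/k!$. By triangularity, $x_j^k$ is a $\Q$-combination of $\binom{x_j}{0},\dots,\binom{x_j}{k}$, and conversely. Taking products over $j$ shows that the $\binom\bx r$ span the same space as the monomials $\bx^r$. Expanding $\binom\bx r$ in monomials gives the leading term $\bx^r/\prod_j r(j)!$ plus monomials $\bx^s$ with $s\le r$ componentwise and $s\neq r$. So the transition matrix is triangular with nonzero diagonal with respect to the componentwise partial order, which is well-founded. Finitely many indices are involved in any finite linear relation, so the $\binom\bx r$ are linearly independent, and they form a $\Q$-basis.

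\emph{$\Z$-combinations are integer-valued.} It suffices that $\binom uk\in\Z$ for every $u\in\Z$. For $u\ge0$ this is a count of subsets. For $u<0$ use $\binom uk=(-1)^k\binom{k-u-1}{k}$. Products and $\Z$-linear combinations of integers are integers.

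\emph{Converse (the main step).} Let $f=\sum_r c_r\binom\bx r$ with $c_r\in\Q$, finitely many nonzero, and suppose $f$ is integer-valued. Only finitely many variables $x_1,\dots,x_N$ occur. I would show $c_r\in\Z$ by evaluating at $a=s\in\N_0^{(\N)}$ supported in $\enum1N$. Since $\binom{s(j)}{r(j)}=0$ whenever $r(j)>s(j)$, and $\binom{s}{s}=1$, we get
\[
  f(s)=\sum_{r\le s} c_r\binom sr = c_s+\sum_{r\le s,\,r\neq s}c_r\binom sr .
\]
Induction along the componentwise order on $s$, which is well-founded, then gives $c_s\in\Z$, since $f(s)\in\Z$ and all the other terms are integers by the induction hypothesis. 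Equivalently one can phrase this through iterated forward differences: $c_r$ equals the $r$-th mixed difference of $f$ at $0$, which is an integer combination of the values of $f$. This is the only genuinely nontrivial step. The delicate points are that the triangular system only involves finitely many unknowns for each $s$, and that evaluation at non-negative points suffices; both are handled by the finiteness of the support of $(c_r)$.
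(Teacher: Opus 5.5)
Your proof is correct. The paper does not prove Proposition~\ref{prp-mivpoly} itself; it takes the statement from the literature (Cahen--Chabert, Ostrowski) as the multivariate version of P\'olya's theorem. What you give is the standard self-contained proof of that classical result:
\begin{itemize}
\item The change of basis between $\bx^r$ and $\binom\bx r$ is triangular with nonzero diagonal for the componentwise order on $\N_0^{(\N)}$. This gives the $\Q$-basis.
\item The identity $\binom uk=(-1)^k\binom{k-u-1}{k}$ shows that $\Z$-combinations are integer-valued on all of $\Z^\N$.
\item For the converse, the system $f(s)=c_s+\sum_{r\le s,\,r\neq s}c_r\binom sr$ is unitriangular over $\Z$ and involves only the finitely many $r\le s$. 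Induction on $s$ therefore gives $c_s\in\Z$. This is Newton interpolation: $c_r$ is the $r$-th mixed forward difference of $f$ at $0$.
\end{itemize}

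Citing the result keeps the paper brief. Your route makes Section~\ref{sec-calculus} self-contained. It also yields a slightly sharper fact: a polynomial that is integer-valued merely on $\N_0^{(\N)}$ is automatically integer-valued on all of $\Z^\N$.
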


Transformation between the two bases
$(\bx^r)_{r\in\N_0^{(\N)}}$ and $\left(\binom\bx r\right)_{r\in\N_0^{(\N)}}$
is accomplished by means of the Stirling numbers 
\OEIS{008275} and \OEIS{008277}
of the first and second kind along the same lines
as are well-known for univariate polynomials.
We skip the technical details
of explicitly writing this out in our multivariate situation.
Let us call a polynomial $f\in\Q[\bx]$
\df{strongly positive} if its non-zero coefficients
with respect to the basis in Proposition~\ref{prp-mivpoly}
are all positive, that is, $f=\sum_{r\in\N_0^{(\N)}} c_r\binom\bx r$
with $c_r\geq0$ for all $r\in\N_0^{(\N)}$.
Clearly, every such $f$ maps $\N_0^\N$ to $\N_0$,
but $f=1-x_1+\binom{x_1}2$ exemplifies that the converse fails.

\begin{lem}\label{lem-strongpos}
If $f, g_1, g_2,\dotsc\in\Q[\bx]$ are strongly positive,
then so is $f(g_1,g_2,\dotsc)$.\footnote{%
    Even if $g_1,g_2,\dotsc$ are not integer-valued,
    $f(g_1,g_2,\dotsc)$ is still well-defined
    as a functional $\Z^\N\to\Q$.%
}
\end{lem}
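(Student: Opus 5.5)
The proof reduces the statement to three closure properties of the set $\mathcal{S}$ of strongly positive polynomials, using the identities of Lemma~\ref{lem-binom}. First, $\mathcal{S}$ is closed under addition and under multiplication by non-negative rationals; this is immediate from the definition. Second, $\mathcal{S}$ is closed under multiplication. Since $\binom\bx r\binom\bx s=\prod_j\binom{x_j}{r(j)}\binom{x_j}{s(j)}$, it suffices to expand each factor $\binom{x_j}{k}\binom{x_j}{l}$. Lemma~\ref{lem-binom}\ref{sub-binomukum} expresses it as a combination of $\binom{x_j}{n}$ with coefficients $\frac{n!}{(n-k)!(n-l)!(k+l-n)!}\geq0$. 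Expanding the product over $j$ and using bilinearity then shows that a product of two strongly positive polynomials is strongly positive.

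Now write $f=\sum_r c_r\binom\bx r$ with $c_r\geq0$, a finite sum. Then
\[
  f(g_1,g_2,\dotsc)=\sum_r c_r\prod_{j}\binom{g_j}{r(j)}.
\]
By the two closure properties, it suffices to show the following: if $g\in\mathcal{S}$ and $k\in\N_0$, then $\binom gk\in\mathcal{S}$. Each product is finite because $r$ has finite support, so only finitely many $g_j$ enter.

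To prove this claim, write $g=\sum_{s}d_s\binom\bx s$ as a finite sum with $d_s\geq0$. Repeated use of Lemma~\ref{lem-binom}\ref{sub-binomu+vk} expresses $\binom gk$ as a sum of products $\prod_s\binom{d_s\binom\bx s}{k_s}$. So, by the multiplicative closure, it suffices to treat a single term $\binom{d\,m}{k}$ with $d\geq0$ rational and $m=\binom\bx s$. For this, apply Lemma~\ref{lem-binom}\ref{sub-binomuvk} with $u=d$ and $v=m$. This gives $\binom{dm}k=\sum\binom{p\times q}{k}\binom dp\binom mq$, and the counts $\binom{p\times q}{k}$ are non-negative. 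It remains to show that $\binom mq$ is strongly positive. The monomial $m$ is itself a product of factors $\binom{x_j}{s(j)}$, so iterating Lemma~\ref{lem-binom}\ref{sub-binomuvk} over these factors reduces this to $\binom{\binom{x_j}{t}}{q}$. Lemma~\ref{lem-binom}\ref{sub-binomumk} expands that as a combination of $\binom{x_j}{n}$ with the covering counts $\threecol ntq\geq0$ as coefficients.

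The main obstacle is the rational scalar $d$: the factor $\binom dp$ can be negative when $d\notin\N_0$, for example $\binom{1/2}{2}<0$. This is exactly the situation of the footnote, where the $g_j$ need not be integer-valued. I would first try to show that the intended meaning makes this harmless. If strong positivity is implicitly meant for integer-valued $f,g_j$, then by Proposition~\ref{prp-mivpoly} all $d_s\in\N_0$, and $\binom{d_s m}{k}$ can be handled by writing $d_s m=m+\dots+m$ and applying Lemma~\ref{lem-binom}\ref{sub-binomu+vk} directly. Then $\binom dp$ never appears and the argument closes.

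For genuinely non-integral coefficients, however, the claim seems false. Take $f=\binom{x_1}{2}$ and $g_1=\tfrac12 x_1$. Then
\[
  f(g_1)=\frac{\frac{x_1}{2}\bigl(\frac{x_1}{2}-1\bigr)}{2}=\frac{x_1^2}{8}-\frac{x_1}{4}=\frac14\binom{x_1}{2}-\frac18 x_1,
\]
which has a negative coefficient on $\binom{x_1}{1}$. So I would restrict the statement, or check the authors' intended use, to integer (hence non-negative integer) coefficients for $g_1,g_2,\dotsc$. In that setting the proof above goes through, and only $f$ may carry arbitrary non-negative rational coefficients.
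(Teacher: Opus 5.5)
Your argument is the expansion through identities (a)--(d) of Lemma~\ref{lem-binom} that the paper's one-line proof refers to. It is correct when the $g_j$ have coefficients in $\N_0$, and that is all the paper ever uses: in Corollary~\ref{cor-Tn} the substituted $x_2, x_3+T_1, x_4+T_2,\dotsc$ are integer-valued, so by Proposition~\ref{prp-mivpoly} their coefficients lie in $\N_0$. Your counterexample is also valid, and even $f=\binom{x_1}{2}$ with the constant $g_1=\frac12$ already gives $\binom{1/2}{2}=-\frac18$, whereas strongly positive polynomials are non-negative on $\N_0^\N$. So the lemma as literally stated (rational $c_r\geq 0$, with the footnote explicitly admitting non-integer-valued $g_j$) is false, and the paper's appeal to Lemma~\ref{lem-binom} only goes through under your restriction.
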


\begin{proof}
This follows from Lemma~\ref{lem-binom}.
\end{proof}

\begin{lem}\label{lem-poly}
    Let $f\in\Q[\bx]$.
    The following statements hold.
    \begin{substate}
    \item $\dder_11=0$, and
        $\dder_1\binom{x_1}k = \binom{x_1}{k-1}$
        for every $k\in\N$.\label{sub-derbinom}
    \item $\dint_1\binom{x_1}k = \binom{x_1}{k+1}$
        for every $k\in\N_0$,
        in particular $\dint_11 = x_1$.\label{sub-intbinom}
    \item $\dder_1f \in \Q[\bx]$.\label{sub-derf}
    \item $\dint_1f \in x_1\Q[\bx]$.\label{sub-intf}
    \item $\dder_1f=0 \iff f\in\Q[x_2,x_3,\dotsc]$.\label{sub-const}
    \end{substate}
\end{lem}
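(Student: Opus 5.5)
The plan is to prove (a) and (b) by direct computation on single binomials, then to get (c)--(e) from these by linearity together with Lemma~\ref{lem-derint}.

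For~\ref{sub-derbinom}, the claim $\dder_11=0$ is immediate, because $1(a+(1))=1=1a$. For $k\in\N$, the functional $(\dder_1\binom{x_1}k)a$ equals $\binom{a(1)+1}k-\binom{a(1)}k$. Pascal's rule $\binom{u+1}k=\binom uk+\binom u{k-1}$ is the case $v=1$ of Lemma~\ref{lem-binom}\ref{sub-binomu+vk}, so this difference is $\binom{a(1)}{k-1}$.

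For~\ref{sub-intbinom}, we avoid case-by-case summation. Set $g:=\binom{x_1}{k+1}$. By~\ref{sub-derbinom}, $\dder_1 g=\binom{x_1}k$. Lemma~\ref{lem-derint}\ref{sub-intder} then gives
\[
\dint_1\binom{x_1}k=\dint_1\dder_1g=g-g(0,x_2,\dotsc)=\binom{x_1}{k+1}-\binom0{k+1}=\binom{x_1}{k+1},
\]
since $k+1\geq1$. Taking $k=0$ gives $\dint_11=x_1$.

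For~\ref{sub-derf} and~\ref{sub-intf}, write $f$ in the basis of Proposition~\ref{prp-mivpoly} and group the terms by the exponent of $x_1$:
\[
f=\sum_{k\ge0}\binom{x_1}k h_k, \qquad h_k\in\Q[x_2,x_3,\dotsc],
\]
where only finitely many $h_k$ are nonzero. Each $h_k$ does not depend on the first coordinate, so $\dder_1h_k=0$.

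We now apply Lemma~\ref{lem-derint}\ref{sub-dersum} and~\ref{sub-intsum} for linearity over the finite sum, and~\ref{sub-fderg} and~\ref{sub-fintg} to pull out each $h_k$. This yields
\[
\dder_1f=\sum_{k\ge1}\binom{x_1}{k-1}h_k\in\Q[\bx], \qquad \dint_1f=\sum_{k\ge0}\binom{x_1}{k+1}h_k.
\]
Every $\binom{x_1}{k+1}$ with $k\ge0$ is divisible by $x_1$ in $\Q[\bx]$, so $\dint_1f\in x_1\Q[\bx]$.

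For~\ref{sub-const}, the direction \qt{$\Leftarrow$} is exactly the observation $\dder_1h=0$ for $h$ free of $x_1$. For \qt{$\Rightarrow$}, suppose $\dder_1f=0$. Then $\sum_{k\ge1}\binom{x_1}{k-1}h_k=0$. The polynomials $\binom{x_1}{k-1}$ are linearly independent over $\Q[x_2,\dotsc]$, because they have distinct degrees in $x_1$; equivalently, this is the basis property of Proposition~\ref{prp-mivpoly}. Hence $h_k=0$ for all $k\geq1$, and $f=h_0\in\Q[x_2,x_3,\dotsc]$.

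The only step that needs some care is this last linear-independence argument, which amounts to uniqueness of the expansion by degree in $x_1$. The rest is routine bookkeeping.
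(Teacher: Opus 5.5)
Your proof is correct and follows the paper's route: expand $f=\sum_k\binom{x_1}k h_k$ with $h_k\in\Q[x_2,x_3,\dotsc]$ and apply (a) and (b) termwise. You also spell out the steps the paper leaves implicit, namely linearity, pulling out $h_k$ via Lemma~\ref{lem-derint}\ref{sub-fderg}--\ref{sub-fintg}, and the linear-independence argument for (e). The one small deviation is in (b), where you derive $\dint_1\binom{x_1}k=\binom{x_1}{k+1}$ from (a) and Lemma~\ref{lem-derint}\ref{sub-intder} instead of citing binomial summation identities, which neatly avoids treating negative $a(1)$ separately.
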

\begin{proof}
    Statements~\ref{sub-derbinom} and~\ref{sub-intbinom}
    follow from well-known properties of the usual binomial coefficients.
    As for the other assertions,
    write $f=\sum_{k=0}^d\binom{x_1}k f_k$
    with $d\in\N_0$ and $\thru f0d\in\Q[x_2,x_3,\dotsc]$.
    Then
    $\dint_1f = \sum_{k=0}^d \binom{x_1}{k+1} f_k$
    and
    $\dder_1f = \sum_{k=1}^d \binom{x_1}{k-1} f_k$
    by~\ref{sub-derbinom} and~\ref{sub-intbinom}.
\end{proof}

\section{Operators on the free group over the integers}\label{sec-FZ}

In order to unveil the algebraic structure
behind the golombic and Levine sequences,
we allow for arbitrary integer terms
and also for arbitrary integer multiplicities.
We denote by $(\FZ,\cat)$ the free group\footnote{%
    As in~\cite[p.~87]{Mc},
    $F\colon\Set\to\Grp$ is the left adjoint
    for the forgetful functor $\Grp\to\Set$.%
}
over $\Z$
and write its elements,
which we call \df{words} (by slight abuse of language),
as parenthesised finite formal power products,
so that $()$ denotes the empty word,
the identity element in $\FZ$.
The group operation $\cat$ is concatenation of words
subject only to the reduction rules
\begin{gather}\label{eqn-redrules}
    (b^m)\cat(b^n)=(b^mb^n)=(b^{m+n})
    \quad\text{and}\quad
    (b^0)=()
    \quad\text{for }b,m,n\in\Z,
\end{gather}
that is, adjacent formal powers with equal base \qt{melt together}
and may even disappear.
This leads to the following well-known fact.
\begin{rmk}\label{rmk-FZreduced}
    Every word has a unique representation
    $(b_1^{m_1}\!\dotsc b_k^{m_k})$ subject to the conditions
    $k\in\N_0$,
    $\thru m1k\in\Z\sm\{0\}$, $\thru b1k\in\Z$
    and $b_j\neq b_{j+1}$ for $1\leq j<k$.
\end{rmk}
The representation in Remark~\ref{rmk-FZreduced}
will be referred to as \df{reduced}.
A formal power with base $b\in\Z$ and exponent $1$
will sometimes be abbreviated $b^1=b$.
The inverse of a word $w$ in the group $(\FZ,\cat)$ will be denoted $w^{-1}$.
We consider the {\em set}\, $\Z^{(\N)}$ as a submonoid
$(\Z^{(\N)},\cat)$ of $(\FZ,\cat)$ in virtue of the embedding
\[
    \Z^{(\N)}\into\FZ,\quad a\mapsto (a(1)^1\!\dotsc a(\len a)^1)
\]
and extend the length and content maps
$\len{}$ and $\content{\ }$
introduced in Section~\ref{sec-intseq}
to  group homomorphisms\footnote{%
    In the natural bijection $\Grp(\FZ,\Z)\simeq\Set(\Z,\Z)$
    of hom-sets (cf.~\cite[pp.~79--82]{Mc}),
    $\len{}$ and $\content{\ }$ are the left adjuncts
    of the constant function $\Z\to\Z$, $z\mapsto1$
    and of the identity map $\id_\Z$, respectively.%
}
$(\FZ,\cat)\to(\Z,+)$ by setting
\[
    \len{(b_1^{m_1}\!\dotsc b_k^{m_k})} := m_1+\dotsm+m_k
    \quad\text{and}\quad
    \content{(b_1^{m_1}\!\dotsc b_k^{m_k})} := m_1b_1+\dotsm+m_kb_k
\]    
for $k\in\N_0$, $\thru b1k,\thru m1k\in\Z$.
Both these homomorphisms are compatible
with the reduction rules~\eqref{eqn-redrules},
which is why we did not require their arguments to be reduced.
The next theorem establishes for every $z\in\Z$
an operator $\Gol_z\colon\FZ\to\FZ$, $w\mapsto \Gol_zw$,
which will be seen to naturally generalise the Golombic operator
$\Gol\colon \N_0^{(\N)}\to \N_0^{(\N)}$ introduced in Section~\ref{sec-intseq}.

\begin{thm}\label{thm-Golz}
    There is a unique family $(\Gol_z)_{z\in\Z}$ of maps
    $\Gol_z\colon\FZ\to\FZ$ satisfying the three axioms
    \begin{align}
	    \label{eqn-Golz0}
	    \Gol_z() &= ()
	    &&\text{for all } z\in\Z,\\
	    \label{eqn-Golzb}
	    \Gol_z(b) &= (z^b)
	    &&\text{for all } z,b\in\Z,\\
	    \label{eqn-Golzvw}
	    \Gol_z(v\cat w) &= \Gol_zv \cat \Gol_{z+\len{v}}w
	    &&\text{for all } z\in\Z,\; v,w\in\FZ.
    \end{align}
    
\end{thm}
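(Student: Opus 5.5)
The plan is to obtain both existence and uniqueness from the universal property of the free group $\FZ$. The idea is to encode a whole family $(\Gol_zw)_{z\in\Z}$ together with the length $\len w$ as a single element of a suitable group $H$. Axiom~\eqref{eqn-Golzvw} will then say exactly that $w\mapsto\bigl((\Gol_zw)_{z\in\Z},\len w\bigr)$ is a group homomorphism $\FZ\to H$.

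First I construct $H$. Its underlying set is $\FZ^\Z\times\Z$, where $\FZ^\Z$ is the set of all maps $f\colon\Z\to\FZ$. The product is
\[
    (f,s)\cdot(g,t) := \bigl(z\mapsto f(z)\cat g(z+s),\; s+t\bigr).
\]
This is the semidirect product $\FZ^\Z\rtimes\Z$, in which $\Z$ acts on the group $(\FZ^\Z,\cat)$ (pointwise concatenation) by shifts $(t\cdot g)(z)=g(z+t)$. I would verify the group axioms directly, since this is the only place where a convention could go wrong and I expect it to be the main (if modest) obstacle.
\begin{itemize}
\item Associativity: both $((f,s)(g,t))(h,u)$ and $(f,s)((g,t)(h,u))$ have first component $z\mapsto f(z)\cat g(z+s)\cat h(z+s+t)$, and second component $s+t+u$.
\item Identity: the pair $(z\mapsto(),0)$ is the neutral element.
\item Inverses: the inverse of $(f,s)$ is $\bigl(z\mapsto f(z-s)^{-1},-s\bigr)$.
\end{itemize}

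For existence, define generator images $\phi(b):=\bigl(z\mapsto(z^b),\,1\bigr)\in H$ for $b\in\Z$. The universal property of $\FZ$ extends $\phi$ to a unique group homomorphism $\Phi\colon\FZ\to H$. The second-component projection $H\to\Z$ is a homomorphism, and composing it with $\Phi$ gives a homomorphism $\FZ\to\Z$ that sends every generator $(b)$ to $1$. This composite agrees with $\len{}$ on generators, so it equals $\len{}$. Hence $\Phi(w)=\bigl((\Gol_zw)_{z\in\Z},\len w\bigr)$, where I define $\Gol_zw$ to be the first component of $\Phi(w)$ evaluated at $z$. The three axioms then follow.
\begin{itemize}
\item Axiom~\eqref{eqn-Golz0} holds because $\Phi$ sends the identity $()$ to the identity of $H$.
\item Axiom~\eqref{eqn-Golzb} holds by the choice of $\phi$.
\item Axiom~\eqref{eqn-Golzvw} follows from $\Phi(v\cat w)=\Phi(v)\cdot\Phi(w)$ and the definition of the product, because the second component of $\Phi(v)$ is $\len v$.
\end{itemize}

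For uniqueness, let $(\Gol'_z)_{z\in\Z}$ be any family satisfying the three axioms, and set $\Psi(w):=\bigl((\Gol'_zw)_{z\in\Z},\len w\bigr)$. Axiom~\eqref{eqn-Golzvw}, together with additivity of $\len{}$, says precisely that $\Psi(v\cat w)=\Psi(v)\cdot\Psi(w)$, so $\Psi$ is a homomorphism $\FZ\to H$. Axiom~\eqref{eqn-Golzb} gives $\Psi((b))=\phi(b)$ for all $b$. By uniqueness in the universal property, $\Psi=\Phi$, and hence $\Gol'_z=\Gol_z$ for all $z$.

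As a sanity check on the formula for inverses in $H$, I would note that it forces $\Gol_z(b^{-1})=((z-1)^{-b})$. This can be confirmed directly from the axioms via $()=\Gol_{z-1}\bigl((b)\cat(b^{-1})\bigr)=(({z-1})^b)\cat\Gol_z(b^{-1})$.
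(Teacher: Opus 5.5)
Your proof is correct, and it takes a genuinely different route from the paper's.

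\textbf{The paper's route.} It works with normal forms. It first derives from the axioms the explicit rules \eqref{eqn-Golzbm} and \eqref{eqn-Golzbm1k}. Combined with the uniqueness of reduced representations (Remark~\ref{rmk-FZreduced}), these give uniqueness. It then takes these rules as the definition of $\Gol_z$ and verifies Axiom~\eqref{eqn-Golzvw} by hand. First it proves the one-syllable case \eqref{eqn-Golzbm+n} through a case analysis on the signs of $m$, $n$ and $m+n$. Then it inducts on the number of syllables of $v$ and $w$, treating the melting case $a_k=b_1$ separately.

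\textbf{Your route.} You observe that Axiom~\eqref{eqn-Golzvw} says exactly that $w\mapsto\bigl((\Gol_zw)_{z\in\Z},\len w\bigr)$ is multiplicative into $\FZ^\Z\rtimes\Z$. The universal property of $\FZ$ then yields existence and uniqueness at once. All the compatibility with the reduction rules~\eqref{eqn-redrules} that the paper checks case by case is absorbed into your short verification of the group axioms for $H$. Normal forms are not needed at all.

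\textbf{Comparison.} Your argument is shorter and more conceptual. Your inverse formula in $H$ also gives Corollary~\ref{cor-Golw}\ref{sub-Golw-1} for free, since $\Phi(w^{-1})=\Phi(w)^{-1}$ reads $\Gol_{z+\len w}(w^{-1})=(\Gol_zw)^{-1}$. The paper's route has the advantage of producing the explicit formulas \eqref{eqn-Golzbm}--\eqref{eqn-Golzbm1k} along the way, and it relies on them later, for instance in Corollary~\ref{cor-Golw}, Lemma~\ref{lem-Golinvo} and the surjectivity remark. With your approach these formulas would still have to be derived from the axioms afterwards, but that is the easy direction: it is precisely the short first half of the paper's proof.
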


\begin{proof}
From the three axioms, we infer
$()=\Gol_z(b^{\mi1}b^1)=\Gol_z(b^{\mi1})\cat\Gol_{z-1}(b)$,
hence
\[
    \Gol_z(b^{\mi1}) = (\Gol_{z-1}(b))^{-1}
                     = ((z-1)^b)^{-1} = ((z-1)^{\mi b})
    \quad \text{for all } b,z\in\Z.
\]
Using induction and all three axioms once more
leads to the three rules
\begin{gather}
    \label{eqn-Golzbm}
    \Gol_z(b^m) = (z^b\!\dotsc(z+m-1)^b),\;
    \Gol_z(b^{\mi m}) = ((z-1)^{\mi b}\!\dotsc(z-m)^{\mi b})
    \;\text{ for } b\in\Z, m\in\N,\\
    \label{eqn-Golzbm1k}
    \Gol_zw = \Gol_{z+s_0}(b_1^{m_1}) \cat\dotsm\cat \Gol_{z+s_{k-1}}(b_k^{m_k})
    \quad\text{for any reduced } w = (b_1^{m_1}\!\dotsc b_k^{m_k}) \in \FZ,\\
    \nonumber
    \text{where } s_0=\len()=0,\; s_j := \len(b_1^{m_1}\!\dotsc b_{j}^{m_{j}})
                       = m_1+\dotsm+m_j
    \text{ for } j\in\enum1k,
\end{gather}
for any given $z\in\Z$.
According to Remark~\ref{rmk-FZreduced},
these rules define a family $(\Gol_z)_{z\in\Z}$
of maps $\Gol_z\colon\FZ\to\FZ$.
Axiom~\eqref{eqn-Golz0} then follows from rule~\eqref{eqn-Golzbm1k} for $w=()$,
and Axiom~\eqref{eqn-Golzb} is the first rule in~\eqref{eqn-Golzbm} with $m=1$.
All we need to still verify is
that this family satisfies Axiom~\eqref{eqn-Golzvw},
which we first prove a special version of, namely
\begin{gather}
    \label{eqn-Golzbm+n}
    \Gol_z(b^{m+n}) = \Gol_z(b^m)\cat \Gol_{z+m}(b^n)
    \quad \text{for all } z,b,m,n\in\Z.
\end{gather}
Let $z\in\Z$.
For $mn\geq0$,
\eqref{eqn-Golzbm+n} follows directly from~\eqref{eqn-Golzbm},
either the first or the second rule.
The other cases require combining both rules:
If $m<0=m+n$, then
\[
    \Gol_z(b^{m+n}) = ()
    = ((z-1)^{\mi b}\!\dotsc(z+m)^{\mi b}) \cat ((z+m)^b\!\dotsc(z-1)^b)
    = \Gol_z(b^m)\cat \Gol_{z+m}(b^n)
\]
by~\eqref{eqn-Golz0}, \eqref{eqn-redrules} and~\eqref{eqn-Golzbm},
and likewise for $n<0=m+n$.
For $m<0<m+n$ we reason
\begin{align*}
    \Gol_z(b^{m+n})
    &= (z^b\!\dotsc(z+m+n-1)^b)\\
    &= ((z-1)^{\mi b}\!\dotsc(z+m)^{\mi b})
       \cat((z+m)^b\!\dotsc(z-1)^bz^b\!\dotsc(z+m+n-1)^b)\\
    &= \Gol_z(b^m)\cat \Gol_{z+m}(b^n).
\end{align*}
There are three more cases,
namely\; $m<m+n<0$,\; $n<0<m+n$,\; $n<m+n<0$,\;
and they are settled similarly,
thus we may consider~\eqref{eqn-Golzbm+n} proved.

To verify~\eqref{eqn-Golzvw},
let $v=(a_1^{m_1}\!\dotsc a_k^{m_k})$,
$w=(b_1^{n_1}\!\dotsc b_l^{n_l})\in\FZ$
both be in reduced representation and $z\in\Z$.
If $kl=0$, then at least one of $v$, $w$ is the empty word
and~\eqref{eqn-Golzvw} holds trivially by~\eqref{eqn-Golz0}.
We may therefore assume $kl>0$
and proceed by induction on $k+l$.
If $a_k\neq b_1$,
then~\eqref{eqn-Golzbm1k} entails~\eqref{eqn-Golzvw}
straightforwardly.
If $a_k = b_1$, then
\[
    v' := v \cat (b_1^{n_1}) = (a_1^{m_1}\!\dotsc a_{k-1}^{m_{k-1}}a_k^{m_k+n_1})
    \quad\text{and}\quad
    w' := (b_1^{\mi n_1})\cat w = (b_2^{n_2}\!\dotsc b_l^{n_l})
\]
satisfy $v\cat w = v'\cat w'$ and
$
    \Gol_zv'
    = \Gol_z(a_1^{m_1}\!\dotsc a_{k-1}^{m_{k-1}}) \cat \Gol_{z+\len{v}-m_k}(a_k^{m_k+n_1})
    = \Gol_zv \cat \Gol_{z+\len{v}}(b_1^{n_1})
$
by~\eqref{eqn-Golzbm1k} and~\eqref{eqn-Golzbm+n},
hence, applying the induction hypothesis to $v'\cat w'$, yields
\[
    \Gol_z(v\cat w) = \Gol_z(v'\cat w') = \Gol_zv' \cat \Gol_{z+\len{v'}}w'
    = \Gol_zv \cat \Gol_{z+\len{v}}(b_1^{n_1}) \cat \Gol_{z+\len{v}+n_1}w'
    = \Gol_zv \cat \Gol_{z+\len{v}}w
\]
using~\eqref{eqn-Golzbm1k} again.
\end{proof}

Having proved the theorem, we can now, of course,
drop the assumption in formula~\eqref{eqn-Golzbm1k}
that the word $w$ be given in reduced form,
which we had only included for technical reasons.
We list some immediate consequences.

\begin{cor}\label{cor-Golw}
Let $z,m,b\in\Z$ and $w\in\FZ$. Then
\begin{substate}
\item $\Gol_z(0^m)=()$,\label{sub-Golz0m}
\item $\Gol_1\rest{\N_0^{(\N)}} = \Gol$,\label{sub-Gol1}
\item $\len{\Gol_zw} = \content w$,\label{sub-lenGolzw}
\item $\content{\Gol_z(b^m)} = mb(z+\frac{m-1}2)$,\label{sub-contGolzbm}
\item $\content{\Gol_zw} = \content{\Gol_0w} + z\content{w}$,\label{sub-contGolzw}
\item $(\Gol_zw)^{-1} = \Gol_{z+\len{w}}(w^{-1})$.\label{sub-Golw-1}
\end{substate}
\end{cor}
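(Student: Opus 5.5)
Everything will follow from the explicit rules \eqref{eqn-Golzbm} and \eqref{eqn-Golzbm1k} established in the proof of Theorem~\ref{thm-Golz}, together with the homomorphism property of $\len{}$ and $\content{\ }$ and Axiom~\eqref{eqn-Golzvw}. The general strategy for (c), (e) and (f) is to reduce to single formal powers $(b^m)$ via \eqref{eqn-Golzbm1k}, or more cleanly via \eqref{eqn-Golzvw}, and then check the power case directly.

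For (a), we apply \eqref{eqn-Golzbm} with $b=0$: each factor becomes $(y^0)=()$ by \eqref{eqn-redrules}, and the case $m=0$ is \eqref{eqn-Golz0}. For (b), we take $a\in\N_0^{(\N)}$, viewed as the word $(a(1)^1\dotsc a(n)^1)$ with $n=\len a$. Then \eqref{eqn-Golzbm1k} (with $s_{j-1}=j-1$) gives $\Gol_1 a=(1^{a(1)})\cat(2^{a(2)})\cat\dotsm\cat(n^{a(n)})$, which is exactly $\Gol a$. Trailing zeros of $a$ are not an issue, since the embedding only uses positions up to $\len a$.

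For (c), \eqref{eqn-Golzbm} shows $\len{\Gol_z(b^m)}=mb$ for $m\geq0$ and $=-mb$ for $-m$ negative, so in both signs $\len{\Gol_z(b^m)}=mb=\content{(b^m)}$. Since $\len{}$ is a homomorphism, \eqref{eqn-Golzbm1k} gives $\len{\Gol_zw}=\sum m_jb_j=\content w$. For (d), if $m\geq0$ then $\content{\Gol_z(b^m)}=b\sum_{i=0}^{m-1}(z+i)=mb(z+\frac{m-1}2)$. If $m=-k<0$ then the content is $-b\sum_{i=1}^{k}(z-i)=-kb\,(z-\frac{k+1}2)$, which equals $mb(z+\frac{m-1}2)$ since $\frac{m-1}{2}=-\frac{k+1}2$.

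For (e), we write $w$ in reduced form and use \eqref{eqn-Golzbm1k} and additivity of $\content{\ }$. Then $\content{\Gol_zw}=\sum_j m_jb_j(z+s_{j-1}+\frac{m_j-1}2)$, and its $z$-dependent part is $z\sum_j m_jb_j=z\content w$; setting $z=0$ identifies the rest as $\content{\Gol_0w}$. For (f), we apply \eqref{eqn-Golzvw} to $w\cat w^{-1}=()$, giving $()=\Gol_z w\cat\Gol_{z+\len w}(w^{-1})$ by \eqref{eqn-Golz0}, and group inversion yields the claim.

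The only mildly delicate point is the sign bookkeeping for negative exponents in (c) and (d). I expect no real obstacle there: it is a direct check against the second rule of \eqref{eqn-Golzbm}.
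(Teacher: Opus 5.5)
Your proposal is correct and takes essentially the same route as the paper: each part is read off from the explicit rules \eqref{eqn-Golzbm}--\eqref{eqn-Golzbm1k} together with the fact that $\len{}$ and $\content{\ }$ are additive, and (f) is the same $w\cat w^{-1}=()$ argument. One small point: in (b) the word $(a(1)^1\!\dotsc a(n)^1)$ need not be reduced, so you are using \eqref{eqn-Golzbm1k} in its non-reduced form, which is legitimate after Theorem~\ref{thm-Golz} or follows directly from Axiom~\eqref{eqn-Golzvw}, the axiom the paper cites.
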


\begin{proof}
\begin{subproof}
\item Use~\eqref{eqn-Golz0} for $m=0$ and~\eqref{eqn-Golzbm} for $m\neq0$.
\item holds due to Axioms~\eqref{eqn-Golz0}--\eqref{eqn-Golzvw}
        and the definition of $\Gol$ in Section~\ref{sec-intseq}.
\item follows from~\eqref{eqn-Golzbm}, \eqref{eqn-Golzvw}
    and the definition of $\len{}$ and $\content{\ }$.
\item is trivial by Axiom~\eqref{eqn-Golz0} if $m=0$.
  Otherwise apply $\content\ $ to~\eqref{eqn-Golzbm}.
\item follows from~\eqref{eqn-Golzbm1k}, \ref{sub-contGolzbm}
    and the definition of $\content w$.
\item holds because
    $\Gol_zw \cat \Gol_{z+\len{w}}(w^{-1}) = \Gol_z(w\cat w^{-1}) = \Gol_z() = ()$
    by~\eqref{eqn-Golzvw} and~\eqref{eqn-Golz0}.\qedhere
\end{subproof}
\end{proof}

The following observations are remarkable
but will not be used in the sequel.

\begin{rmk}
Let  $z\in\Z$.
\begin{substate}
\item The operator $\Gol_z\colon\FZ\to\FZ$ is surjective but not injective.
\item The restriction $\Gol_z\rest{\Z^{(\N)}}$ is injective with image
    \[
    \Gol_z\Z^{(\N)} = \set{ (b_1^{m_1}\!\dotsc b_k^{m_k}) }
        { k\in\N_0,\;z\leq b_1<\dotsm<b_k,\;\thru m1k\in\Z\sm\{0\} }
    \subsetneq \FZ,
    \]
    and
    $\Gol_z\N_0^{(\N)} = \set{a\in \N_0^{(\N)}}{z\leq a(1)\leq\dotsm\leq a(\len a)}$.
\end{substate}
\end{rmk}

\begin{proof}
\begin{subproof}
\item We easily verify that
    $\Gol_z(0^{b_1-z} m_1^1 0^{b_2-b_1-1} m_2^1 \dotsc0^{b_k-b_{k-1}-1} m_k^1) = w$
    for any given $w=(b_1^{m_1}\!\dotsc b_k^{m_k})\in\FZ$
    due to the rules~\eqref{eqn-Golzbm}--\eqref{eqn-Golzbm1k}.
    On the other hand,
    \[
        (b^{m+n})
        = \Gol_z(0^{b-z}m^10^{\mi1}n^1)
        = \Gol_z(0^{b-z}m^1{\mi n}^{\mi1})
        = \Gol_z(0^{b+1-z}{\mi m}^{\mi1}n^1)
        = \Gol_z(0^{b+1-z}{\mi m}^{\mi1}0^1{\mi n}^{\mi1})
    \]
    for every choice of $b,m,n\in\Z$.
\item Obviously,
    $\Gol_za = (z^{a(1)}\!\dotsc(z+\len a-1)^{a(\len a)})$
    determines $a\in \Z^{(\N)}$  uniquely.
    The latter identity also shows that both images look as asserted.
\qedhere
\end{subproof}
\end{proof}

In view of~\ref{cor-Golw}\ref{sub-Gol1},
$\Gol_z$ generalises the golombic operator $\Gol$
introduced in Section~\ref{sec-intseq}.
As before, we want to investigate iterated compositions
\[
    \Gol_a^n := \Gol_{a(n)}\!\circ\dotsm\circ \Gol_{a(1)} \colon \FZ\to\FZ,\\
\]
for $a\in\Z^\N$, $n\in\N_0$.
(In particular, $\Gol_a^0=\id_\FZ$.)
Let us start by recording a useful formula.

\begin{lem}\label{lem-Golanvw}
    Let $a\in\Z^\N$, $v,w\in\FZ$,
    and set $v_n:=\Gol_a^nv$ for $n\in\N_0$.
    Then
    \[
        \Gol_a^n(v\cat w)
        = v_n \cat \Gol_{a+(\len{v_0},\len{v_1},\len{v_2},\dotsc)}^nw
    \]
    holds for every $n\in\N_0$.
\end{lem}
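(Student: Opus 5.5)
We argue by induction on $n$, with the statement quantified over all $a\in\Z^\N$ and all $v,w\in\FZ$, and we use Axiom~\eqref{eqn-Golzvw} of Theorem~\ref{thm-Golz} for the step. Throughout, write $d:=(\len{v_0},\len{v_1},\len{v_2},\dotsc)\in\Z^\N$, so the claim reads $\Gol_a^n(v\cat w)=v_n\cat\Gol_{a+d}^nw$. For $n=0$ both sides equal $v\cat w$, since $\Gol_a^0=\Gol_{a+d}^0=\id_\FZ$.

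For the step, assume the identity for some $n\in\N_0$. Then
\[
    \Gol_a^{n+1}(v\cat w)
    = \Gol_{a(n+1)}\bigl(\Gol_a^n(v\cat w)\bigr)
    = \Gol_{a(n+1)}\bigl(v_n\cat\Gol_{a+d}^nw\bigr)
    = \Gol_{a(n+1)}v_n \cat \Gol_{a(n+1)+\len{v_n}}\bigl(\Gol_{a+d}^nw\bigr),
\]
where the last equality is Axiom~\eqref{eqn-Golzvw} with $z=a(n+1)$. The first factor is $v_{n+1}$ by definition of $v_{n+1}=\Gol_a^{n+1}v$.

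It remains to identify the second factor. Its index is $a(n+1)+\len{v_n}=(a+d)(n+1)$, because the $(n+1)$-st entry of $d$ is $\len{v_n}$. Hence
\[
    \Gol_{(a+d)(n+1)}\circ\Gol_{a+d}^n=\Gol_{a+d}^{n+1},
\]
which is exactly the definition of the iterated composition, and the step is complete.

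There is no real obstacle here. The only point needing care is the index bookkeeping: $d$ is indexed from $1$ while $v_n$ is indexed from $0$, so the $(n+1)$-st entry of $d$ is $\len{v_n}$, and this is precisely the shift Axiom~\eqref{eqn-Golzvw} produces. Also, $d$ depends only on $v$ and $a$, not on $n$, so the same sequence $a+d$ appears at every stage of the induction.
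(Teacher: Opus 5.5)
Your proof is correct and is the paper's argument: induction on $n$ using Axiom~\eqref{eqn-Golzvw}, with the index shift $a(n+1)+\len{v_n}=(a+d)(n+1)$ handled exactly as you do. The paper states this in one line, and you have filled in the details correctly.
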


\begin{proof}
This follows from Axiom~\eqref{eqn-Golzvw} by induction on $n$.
\end{proof}

Vardi~\cite{Va} pictures the iteration $\Gol_a^n$ for $a\in\N_0^\N$
in a \qt{royal family} tree.
He introduces
functionals $T_1,T_2,T_3\dotsc\colon\N_0^\N\to\N_0$
to count the number of children at each generation
and establishes certain polynomial formulas for them.
In our terminology, his definition reads 
$
    T_na := \content{\Gol_a^n(1)}
$
for $n\in\N$, $a\in\N_0^\N$.
Our aim is to extend Vardi's definition
to arbitrary integer sequences $a$
and prove that his formulas still hold.
We thus define the functionals $T_0, T_1,T_2,\dotsc\colon\Z^\N\to\Z$
by
\[
    T_na := \content{\Gol_a^n(1)}
    \quad\text{for } n\in\N_0,\; a\in\Z^\N
\]
and start by listing some properties.
In order to conveniently formulate them,
we abbreviate $f\rest k:=f(\thru x1k,0,0,\dotsc)$
for any functional $f\colon\Z^\N\to\Q$ and $k\in\N_0$.

\begin{rmk}\label{rmk-Tn}
    Let $n\in\N$. We have the following identities.
    \begin{substate}
    \item $T_0 = 1$, $T_1 = x_1$, $T_2 = x_1x_2$, and
        $T_3 = \textstyle x_1x_2 ( \frac{x_1-1}2 + x_3 )
             = x_1x_2x_3 + \binom{x_1}2 x_2$.\label{sub-T0123}
    \item $T_n = T_n\rest n$.\label{sub-Tnrestn}
    \item $T_n(0,x_2,x_3,\dotsc)=0$.\label{sub-Tn0x2}
    \item If $n\geq2$, then $T_n(x_1,0,x_3,x_4,\dotsc)=0$.\label{sub-Tnx10}
    \item $T_n(1,x_2,x_3,\dotsc) = T_{n-1}(x_2,x_3,\dotsc)$.\label{sub-Tn1x2}
    \item $(\dder_1T_n)(0,\thru x2n) = T_{n-1}(x_2,\dotsc,x_n)$.\label{sub-derTn0x2}
    \item $T_n = T_n\rest{n-1} + x_nT_{n-1}$.\label{sub-Tnrest}
    \end{substate}
\end{rmk}

\begin{proof}
We obtain~\ref{sub-T0123} by direct calculation
and~\ref{sub-Tnrestn} from our definition.
The next two assertions follow from~\ref{cor-Golw}\ref{sub-Golz0m}.
Identity~\ref{sub-Tn1x2} holds because $\Gol_1(1)=(1)$,
and~\ref{sub-derTn0x2} follows
from~\ref{sub-Tn0x2} and~\ref{sub-Tn1x2}.
As for~\ref{sub-Tnrest}, let $a\in\Z^\N$ and
apply~\ref{cor-Golw}\ref{sub-contGolzw}
to $z:=a(n)$ and $w:=\Gol_a^{n-1}(1)$.
\end{proof}

In order to re-establish Proposition~3 from~\cite{Va},
which Vardi used to calculate $\thru T14$,
we introduce the map
$\hat{\ }\colon\Z^\N\to\Z^\N$,
$a\mapsto \hat a:= a + (-2a(1),0,T_1a,T_2a,T_3a,\dotsc)$,
that is,
\[
    \hat a(1) = -a(1),\quad
    \hat a(2) = a(2)
    \quad\text{and}\quad
    \hat a(n+2) = a(n+2)+T_na
    \quad\text{for } n\in\N.
\]

As an aside, we record a functional equation.

\begin{prp}\label{prp-Tnhata}
    Let $a\in\Z^\N$ and $n\in\N$.
    Then $T_n\hat a = -T_na$.
\end{prp}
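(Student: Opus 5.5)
The idea is to compare the two words $w_n:=\Gol_a^n(1)$ and $w'_n:=\Gol_{\hat a}^n(1)$ directly, rather than working with polynomial formulas. I will show by induction on $n$ that
\[
    w'_n = w_n^{-1} \quad\text{for all } n\geq2.
\]
The statement then follows at once, because the content $\content{\ }$ is a group homomorphism $(\FZ,\cat)\to(\Z,+)$. Indeed, this gives $T_n\hat a=\content{w'_n}=\content{w_n^{-1}}=-\content{w_n}=-T_na$ for $n\geq2$. The case $n=1$ is separate and immediate: $T_1\hat a=\hat a(1)=-a(1)=-T_1a$ by Remark~\ref{rmk-Tn}\ref{sub-T0123}.

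For the base case $n=2$ I compute with Axiom~\eqref{eqn-Golzb} and the rules~\eqref{eqn-Golzbm}. On the one hand, $w_1=(a(1))$ and $w_2=\Gol_{a(2)}(a(1))=(a(2)^{a(1)})$. On the other hand, $w'_1=(-a(1))$ and $w'_2=\Gol_{a(2)}(-a(1))=(a(2)^{-a(1)})$. Hence $w'_2=w_2^{-1}$.

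For the induction step, assume $w'_n=w_n^{-1}$ for some $n\geq2$. Then $w'_{n+1}=\Gol_{\hat a(n+1)}(w_n^{-1})$. By Corollary~\ref{cor-Golw}\ref{sub-Golw-1}, we have $w_{n+1}^{-1}=(\Gol_{a(n+1)}w_n)^{-1}=\Gol_{a(n+1)+\len{w_n}}(w_n^{-1})$. So it suffices to check the index identity $\hat a(n+1)=a(n+1)+\len{w_n}$. Corollary~\ref{cor-Golw}\ref{sub-lenGolzw} gives $\len{w_n}=\content{w_{n-1}}=T_{n-1}a$. The definition of the hat map gives $\hat a(n+1)=a(n+1)+T_{n-1}a$, valid since $n+1\geq3$, i.e.\ $n-1\geq1$. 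The two indices therefore agree, which completes the induction.

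I do not expect a real obstacle here. The only delicate point is the bookkeeping of indices: the shift by $T_{n-1}a$ that is built into $\hat a$ must match exactly the correction term $\len{w}$ in Corollary~\ref{cor-Golw}\ref{sub-Golw-1}. The base case must start at $n=2$, because $\hat a(2)=a(2)$ carries no correction term.
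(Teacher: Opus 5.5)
Your proof is correct and is essentially the paper's argument. The paper also shows $\Gol_{\hat a}^n(1)=w_n^{-1}$, from the same $n=2$ computation and the same index identity $\hat a(n+2)=a(n+2)+\len{w_{n+1}}$; the only difference is that it handles all $n\geq3$ at once by applying Lemma~\ref{lem-Golanvw} to $w_2\cat w_2^{-1}$ with the shifted sequence $\Ver2a$, whereas you unroll this into a step-by-step induction via Corollary~\ref{cor-Golw}\ref{sub-Golw-1}.
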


\begin{proof}
Abbreviating $w_n := \Gol_a^n(1)$,
we obtain
\begin{gather}\label{eqn-hatawn}
    \hat a(n+2) = a(n+2)+T_na = a(n+2)+\content{w_n}
                = a(n+2)+\len{w_{n+1}}
\end{gather}
by~\ref{cor-Golw}\ref{sub-lenGolzw}.
The assertion follows for $n\leq2$
from applying the group homomorphism $\content{\ }$
to the equations $\Gol_{\hat a}^1(1) = \Gol_{-a(1)}(1) = (-a(1))$ and 
$\Gol_{\hat a}^2(1) = \Gol_{a(2)}(-a(1)) = (a(2)^{\mi a(1)}) = w_2^{-1}$,
the second of which moreover entails
\[
    () = \Gol_{\Ver2a}^{n-2}(w_2\cat w_2^{-1})
    = \Gol_{\Ver2a}^{n-2}w_2
      \cat \Gol_{\Ver2\hat a}w_2^{-1}
    = \Gol_a^n(1) \cat \Gol_{\hat a}^n(1)
\]
for $n\geq3$ by~\eqref{eqn-Golz0}, \eqref{eqn-hatawn} and Lemma~\ref{lem-Golanvw}.
Applying $\content{\ }$ here as well completes the proof.
\end{proof}

\begin{cor}\label{cor-hathat}
The map $\hat{\ }\colon\Z^\N\to\Z^\N$ is an involution,
that is, $\hathat a=a$ for every $a\in\Z^\N$.
\end{cor}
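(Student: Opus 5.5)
We verify $\hathat a=a$ coordinate by coordinate, directly from the definition of $\hat{\ }$ and Proposition~\ref{prp-Tnhata}. Fix $a\in\Z^\N$ and set $b:=\hat a$, so that $\hathat a=\hat b$.

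The first two coordinates are immediate. We have $\hat b(1)=-b(1)=-(-a(1))=a(1)$. Likewise $\hat b(2)=b(2)=a(2)$.

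For the remaining coordinates, let $n\in\N$. By the definition of $\hat{\ }$ applied to $b$,
\[
    \hat b(n+2) = b(n+2) + T_n b = a(n+2) + T_na + T_n\hat a .
\]
Proposition~\ref{prp-Tnhata} gives $T_n\hat a=-T_na$. Hence $\hat b(n+2)=a(n+2)$ for every $n\in\N$.

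This covers all indices, so $\hathat a=a$.

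There is essentially no obstacle here. All the real work lies in the functional equation $T_n\hat a=-T_na$, which is already established in Proposition~\ref{prp-Tnhata}. The only point requiring care is that this proposition is invoked only for $n\in\N$, where it is stated, and not for $n=0$. The index $n=0$ never occurs, because $\hat{\ }$ adds nothing to the second coordinate, i.e.\ no $T_0$ term.
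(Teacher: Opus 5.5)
Your proposal is correct and is essentially the paper's own proof: check the first two coordinates directly, then compute $\hathat a(n+2)=\hat a(n+2)+T_n\hat a=a(n+2)+T_na-T_na$ using Proposition~\ref{prp-Tnhata}. Your remark that the index $n=0$ never arises (the second coordinate receives no $T_0$ term) is accurate and harmless.
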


\begin{proof}
Let $a\in\Z^\N$ and $n\in\N$.
Then $\hathat a(1) = -\hat a(1) = a(1)$,
$\hathat a(2) = \hat a(2) = a(2)$ and
$
    \hathat a(n+2) = \hat a(n+2)+T_n\hat a = a(n+2)+T_na-T_na = a(n+2)
$
by~\ref{prp-Tnhata}.
Hence,~$\hathat a = a$.
\end{proof}

After our small detour, will now re-establish Vardi's Proposition~3
from page~8 of~\cite{Va}.

\begin{prp}\label{prp-intTn}
For every $n\in\N$, we have
$T_n = \dint_1\bigl(T_{n-1}(x_2,x_3+T_1,x_4+T_2,\dotsc)\bigr)$.
\end{prp}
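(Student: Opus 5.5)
The plan is to reduce the integral identity to a difference identity. By Remark~\ref{rmk-Tn}\ref{sub-Tn0x2} we have $T_n(0,x_2,x_3,\dotsc)=0$, so Lemma~\ref{lem-derint}\ref{sub-intder} gives $T_n=\dint_1\dder_1T_n$. It therefore suffices to prove
\[
    \dder_1T_n = T_{n-1}(x_2,\,x_3+T_1,\,x_4+T_2,\dotsc),
\]
that is, $T_n(a+(1))-T_na = T_{n-1}(a(2),a(3)+T_1a,a(4)+T_2a,\dotsc)$ for every $a\in\Z^\N$.

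The case $n=1$ is immediate. Here $T_1=x_1$ and $T_0=1$, so the claim is $\dint_11=x_1$, which is Lemma~\ref{lem-poly}\ref{sub-intbinom}. So assume $n\geq2$, fix $a\in\Z^\N$, and put $w_k:=\Gol_a^k(1)$ as in the proof of Proposition~\ref{prp-Tnhata}. Since $a$ and $a+(1)$ differ only in the first term, we get $\Gol^1_{a+(1)}(1)=(a(1)+1)$ and hence
\[
    \Gol^2_{a+(1)}(1)=\Gol_{a(2)}(a(1)+1)=(a(2)^{a(1)+1}) = w_2\cat(a(2)),
\]
using the reduction rules~\eqref{eqn-redrules}. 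The later iterations use the same operators $\Gol_{a(3)},\Gol_{a(4)},\dotsc$ for both sequences. So $\Gol^n_{a+(1)}(1)=\Gol^{n-2}_{\Ver2a}\bigl(w_2\cat(a(2))\bigr)$ and $w_n=\Gol^{n-2}_{\Ver2a}w_2$.

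Next I apply Lemma~\ref{lem-Golanvw} with $v=w_2$, $w=(a(2))$ and the sequence $\Ver2a$. This yields
\[
    \Gol^n_{a+(1)}(1) = w_n\cat \Gol^{n-2}_{\Ver2a+(\len{w_2},\len{w_3},\dotsc)}(a(2)).
\]
By Corollary~\ref{cor-Golw}\ref{sub-lenGolzw}, $\len{w_{k+1}}=\content{w_k}=T_ka$, so the shifted sequence is $(a(3)+T_1a,\,a(4)+T_2a,\dotsc)$. Since $(a(2))=\Gol_{a(2)}(1)$, the second factor equals $\Gol^{n-1}_{b}(1)$ with
\[
    b=(a(2),\,a(3)+T_1a,\,a(4)+T_2a,\dotsc).
\]
Applying the homomorphism $\content{\ }$ then gives $T_n(a+(1))=T_na+T_{n-1}b$, which is the required difference identity.

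The main obstacle is the index bookkeeping in this last step. I must check that Lemma~\ref{lem-Golanvw}, applied to $\Ver2a$ starting from $v_0=w_2$, produces exactly the shifts $T_1a,\dotsc,T_{n-2}a$ in positions $3,\dotsc,n$. I also need Remark~\ref{rmk-Tn}\ref{sub-Tnrestn} to see that $T_{n-1}b$ depends only on these first $n-1$ entries of $b$, so the infinite shift sequence causes no trouble.
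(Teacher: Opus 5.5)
Your proof is correct and matches the paper's argument step for step. Like the paper, you reduce to $\dder_1T_n=T_{n-1}(x_2,x_3+T_1,\dotsc)$ via Lemma~\ref{lem-derint}\ref{sub-intder} and Remark~\ref{rmk-Tn}\ref{sub-Tn0x2}, write $\Gol^2_{a+(1)}(1)=w_2\cat(a(2))$, split with Lemma~\ref{lem-Golanvw}, and identify the shifts through $\len{w_{k+1}}=T_ka$; your sequence $b$ is exactly the paper's $\Ver{}\hat a$, and the appeal to Remark~\ref{rmk-Tn}\ref{sub-Tnrestn} is harmless but unnecessary.
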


\begin{proof}
The case $n=1$ follows directly from~\ref{rmk-Tn}\ref{sub-T0123}
and~\ref{lem-poly}\ref{sub-intbinom}.
Let $a\in\Z^\N$.
As in our proof of Proposition~\ref{prp-Tnhata},
we set $w_n := \Gol_a^n(1)$ for $n\in\N$.
From
$\Gol^2_{a+(1)}(1) = w_2\cat(a(2))$,
Lemma~\ref{lem-Golanvw} and~\eqref{eqn-hatawn}
we conclude for $n\geq2$ that
\[
    \Gol_{a+(1)}^n(1)
    = \Gol_{\Ver2a}^{n-2}(w_2\cat(a(2)))
    = \Gol_{\Ver2a}^{n-2}w_2\;\!
      \cat \Gol_{\Ver2a+(\len{w_2},\len{w_3},\dotsc)}^{n-2}(a(2))
    = \Gol_a^n(1) \cat \Gol_{\Ver{}\hat a}^{n-1}(1),
\]
thus
$(\dder_1T_n)a
 = \content{\Gol_{\Ver{}\hat a}^{n-1}(1)}
 = T_{n-1}\Ver{}\hat a$, that is,
$\dder_1T_n=T_{n-1}(x_2,x_3+T_1,x_4+T_2,\dotsc)$.
The assertion now follows with~\ref{lem-derint}\ref{sub-intder}
and~\ref{rmk-Tn}\ref{sub-Tn0x2}.
\end{proof}

This proposition
reveals that Vardi's functionals are in fact polynomials,
even after our extending their domain to arbitrary integers.
We have implemented the recursive formula
of Proposition~\ref{prp-intTn} in PARI
and calculated Vardi's polynomials $\thru T19$
within less than 17 hours.
Together they take up around 150 megabytes of memory.
To calculate $T_{10}$ would cost
an estimated 14,000 hours and over 4~gigabytes of memory.
Although we know of some tricks to cut time and memory in half,
Vardi's tenth polynomial is still out of reach.

\begin{cor}\label{cor-Tn}
    Let $n\in\N$.
    Vardi's functional
    $T_n$ is a strongly positive integer-valued polynomial
    residing in the ideal $x_1\Q[\thru x1n]$.
    Moreover, $T_n\rest{n-1}\in x_1x_2\Q[\thru x1{n-1}]$.
\end{cor}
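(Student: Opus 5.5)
Argue by induction on $n$, using the recursion of Proposition~\ref{prp-intTn} as the engine. For $n=1$ everything is visible from Remark~\ref{rmk-Tn}\ref{sub-T0123}: $T_1=x_1=\binom{x_1}1$ is strongly positive and integer-valued, it lies in $x_1\Q[x_1]$, and $T_1\rest0=0\in x_1x_2\Q[\,]$ trivially. (If the empty polynomial ring looks odd, start the induction at $n=2$, where $T_2=x_1x_2=\binom{x_1}1\binom{x_2}1$ and $T_2\rest1=0$.)

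For the inductive step, assume the claims for $T_1,\dots,T_{n-1}$. Write
\[
    g := T_{n-1}(x_2,x_3+T_1,x_4+T_2,\dotsc).
\]
The arguments $x_2$ and $x_{k+2}+T_k$ are strongly positive polynomials. Indeed, each $T_k$ with $k<n$ is strongly positive by hypothesis, and a sum of strongly positive polynomials is strongly positive. So Lemma~\ref{lem-strongpos} makes $g$ strongly positive; in particular $g$ is a polynomial.

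Next, expand $g=\sum_k \binom{x_1}k g_k$ with each $g_k$ a nonnegative combination of binomial powers in $x_2,x_3,\dotsc$. By Lemma~\ref{lem-poly}\ref{sub-intbinom} together with \ref{lem-derint}\ref{sub-intsum},\ref{sub-fintg}, we get $T_n=\dint_1 g=\sum_k\binom{x_1}{k+1}g_k$. This is again strongly positive, and it lies in $x_1\Q[\bx]$ because $\binom{x_1}{k+1}$ is divisible by $x_1$. Integer-valuedness follows from Proposition~\ref{prp-mivpoly}. One caveat: strong positivity alone only gives nonnegative rational coefficients, so integrality must come from $T_n$ being $\Z$-valued on $\Z^\N$ by its very definition as a content, and Proposition~\ref{prp-mivpoly} then gives integer coefficients. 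Finally, Remark~\ref{rmk-Tn}\ref{sub-Tnrestn} confines $T_n$ to $\Q[\thru x1n]$, so $T_n\in x_1\Q[\thru x1n]$.

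For the last claim, use Remark~\ref{rmk-Tn}\ref{sub-Tnrest}: $T_n\rest{n-1}=T_n-x_nT_{n-1}$. Since $T_n$ and $T_{n-1}$ lie in $x_1\Q[\thru x1n]$, so does $T_n\rest{n-1}$, and it involves only $\thru x1{n-1}$. It remains to get the factor $x_2$. For $n\ge2$, Remark~\ref{rmk-Tn}\ref{sub-Tnx10} says $T_n$ vanishes when $x_2=0$, so $T_n\rest{n-1}$ vanishes there as well. A polynomial vanishing identically on $x_2=0$ is divisible by $x_2$. Since $x_1$ and $x_2$ are coprime in the UFD $\Q[\thru x1{n-1}]$, we obtain $T_n\rest{n-1}\in x_1x_2\Q[\thru x1{n-1}]$. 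For $n=1$ the restriction is $0$.

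The only delicate point is the integrality issue noted above. Lemma~\ref{lem-strongpos} carries strong positivity through the composition but not integrality of coefficients, so integrality has to be taken from the definition of $T_n$ via Proposition~\ref{prp-mivpoly}, not from the recursion.
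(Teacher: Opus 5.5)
Your proof is correct and follows essentially the paper's route. Like the paper, you run an induction through Proposition~\ref{prp-intTn}, obtain strong positivity from Lemmas~\ref{lem-strongpos}, \ref{lem-poly}\ref{sub-intbinom} and \ref{lem-derint}\ref{sub-fintg}, take integer-valuedness directly from the definition, and get the factor $x_2$ from Remark~\ref{rmk-Tn}\ref{sub-Tnx10}. The only difference is cosmetic: you obtain the factor $x_1$ from the shape $\sum_k\binom{x_1}{k+1}g_k$ and, for $T_n\rest{n-1}$, via \ref{rmk-Tn}\ref{sub-Tnrest}, whereas the paper reads it off directly from \ref{rmk-Tn}\ref{sub-Tn0x2} in both places.
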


\begin{proof}
The functional $T_n$ is a polynomial
by~\ref{rmk-Tn}\ref{sub-T0123}, Proposition~\ref{prp-intTn} and induction.
It is integer-valued by definition
and resides in $x_1\Q[\thru x1n]$
by~\ref{rmk-Tn}\ref{sub-Tnrestn} and~\ref{sub-Tn0x2}.
Its strong positivity then follows by induction
from~\ref{rmk-Tn}\ref{sub-T0123}, Proposition~\ref{prp-intTn},
Lemmas~\ref{lem-strongpos},
\ref{lem-poly}\ref{sub-intbinom}, \ref{sub-const}
and \ref{lem-derint}\ref{sub-fintg}.
The last statement is a consequence
of Remark~\ref{rmk-Tn}\ref{sub-T0123},
          \ref{sub-Tn0x2} and \ref{sub-Tnx10}.
\end{proof}

Corollary~\ref{cor-Golw}\ref{sub-Gol1} entitles us
to abbreviate $\Gol:=\Gol_1\colon\FZ\to\FZ$
thus extending the golombic operator to words.
For $w\in\FZ$, we set
$\gol w := (\gol_nw)_{n\in\N} \in \Z^\N$
with
$\gol_nw := \len{\Gol^{n-1}w}$
for $n\in\N$
and thereby generalise the notions of
golombic sequence and golombic number
introduced in Section~\ref{sec-intseq} from tuples to words.
The golombic numbers based at an arbitrary given word
can be calculated by means of the following iterative formula.

\begin{thm}\label{thm-gol}
    We have
    $\gol_n(v\cat(b^m))
     = \gol_nv + T_n(m,b,1+\gol_1v,1+\gol_2v,\dotsc)$
    for every $n\in\N$, $v\in\FZ$, $b,m\in\Z$.
\end{thm}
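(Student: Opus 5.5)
The plan is to apply Lemma~\ref{lem-Golanvw} with the constant sequence $a=(1,1,1,\dotsc)$, so that $\Gol_a^n=\Gol^n$ for every $n\in\N_0$. Put $v_k:=\Gol^kv$, so that $\len{v_k}=\gol_{k+1}v$. For $n\in\N$, the lemma applied to $v$ and $w:=(b^m)$ gives
\[
    \Gol^{n-1}(v\cat(b^m)) = v_{n-1}\cat\Gol^{n-1}_{c}(b^m),
    \qquad c := (1+\gol_1v,\,1+\gol_2v,\,1+\gol_3v,\dotsc).
\]
Since $\len{}$ is a group homomorphism, applying it to this identity yields
\[
    \gol_n(v\cat(b^m)) = \gol_nv + \len{\Gol^{n-1}_c(b^m)}.
\]
It therefore remains to identify $\len{\Gol^{n-1}_c(b^m)}$ with $T_n(m,b,c(1),c(2),\dotsc)$.

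For this step I would use the definition $T_na=\content{\Gol_a^n(1)}$ with the sequence $a:=(m,b,c(1),c(2),\dotsc)$. The first two applications give $\Gol_{a(1)}(1)=(m^1)$ and $\Gol_{a(2)}(m)=(b^m)$ by Axiom~\eqref{eqn-Golzb}. Hence $\Gol_a^n(1)=\Gol^{n-2}_{\Ver2a}(b^m)=\Gol^{n-2}_c(b^m)$ for $n\geq2$. Corollary~\ref{cor-Golw}\ref{sub-lenGolzw} states $\content{u}=\len{\Gol_zu}$ for every $z$, and applying it to $u=\Gol^{n-2}_c(b^m)$ with $z=c(n-1)$ gives
\[
    T_na=\content{\Gol^{n-2}_c(b^m)}=\len{\Gol_{c(n-1)}\Gol^{n-2}_c(b^m)}=\len{\Gol^{n-1}_c(b^m)},
\]
which is exactly the missing term.

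The small cases need separate checks. For $n=1$, the left-hand term is $\len{(b^m)}=m=T_1a$. For $n=2$, it is $\len{\Gol_{c(1)}(b^m)}=\content{(b^m)}=mb=T_2a$, consistent with Remark~\ref{rmk-Tn}\ref{sub-T0123}; this case is also covered by the general argument with $\Gol^0_c=\id_\FZ$.

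The main obstacle is only bookkeeping: keeping the index shift between $c$ and $\gol v$ straight, and confirming that Lemma~\ref{lem-Golanvw} applies with the shifted sequence $c=(1,1,\dotsc)+(\len{v_0},\len{v_1},\dotsc)$. No reducedness assumptions are needed, because all identities used hold in $\FZ$ for arbitrary words.
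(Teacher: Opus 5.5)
Your proof is correct and takes essentially the paper's approach: Lemma~\ref{lem-Golanvw} with the constant sequence $(1,1,\dotsc)$, the observation $\Gol_b\Gol_m(1)=(b^m)$, and Corollary~\ref{cor-Golw}\ref{sub-lenGolzw} to trade length for content. The only difference is cosmetic: the paper first rewrites $\gol_n$ as $\content{\Gol^{n-2}(\cdot)}$ and applies the lemma $n-2$ times, whereas you apply it $n-1$ times and convert only the second summand.
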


\begin{proof}
Let $v\in\FZ$ and $a=(m,b,1+\gol_1v,1+\gol_2v,\dotsc)$.
We note that $v_n:=\Gol^nv$ satisfies $\len{v_n}=\gol_{n+1}v$
for every $n\in\N_0$.
For $2\leq n\in\N$ and $b,m\in\Z$,
we may therefore conclude
\[
    \gol_n(v\cat(b^m))
    = \content{\Gol^{n-2}(v \cat \Gol_{m,b}(1))}
    = \content{\Gol^{n-2}v} + \content{\Gol_a^n(1)}
    = \gol_nv + T_na
\]
by Lemma~\ref{lem-Golanvw} and Corollary~\ref{cor-Golw}\ref{sub-lenGolzw}.
The case $n=1$ can be verified directly.
\end{proof}

\begin{figure}[!ht]\centering
\renewcommand{\figurename}{Algorithm}
\begin{struktogramm}(128,98)[{\tt golombic\_numbers}$(b_1^{m_1}\!\dotsc b_k^{m_k})$]
\assign{$s \ot (1^9)$\hfill{\em\# nine ones}}
\forallin{{\tt for\ }$n \ot 1,\dotsc,9${\tt\ do}\hfill{\em\# we only have $\thru T19$}}
\assign{\parbox{16em}{Pick $d_n\in\N$ minimal with\\
              $\ti T_n \ot \frac{d_n}{x_1\cdot x_2}\cdot T_n\rest{n-1}\in\Z[\bx]$}}
\forallinend
\forallin{{\tt for\ }$j \ot 1,\dotsc,k${\tt\ do}}
\assign{$\delta \ot 1$}
\assign{$a \ot (m_j,b_j)\cat s$}
\forallin{{\tt for\ }$n \ot 1,\dotsc,9${\tt\ do}}
\assign{$\delta \ot \frac{m_jb_j\cdot\ti T_na}{d_n} + a(n)\cdot\delta$}
\assert{At this point we have $\delta = T_na$.}
\assign{$s(n) \ot  s(n) + \delta$}
\assert{Now we have $s(n) = 1+\gol_n(b_1^{m_1}\!\dotsc b_j^{m_j})$.}
\forallinend
\forallinend
\return{{\tt return }$s - (1^9)$}
\end{struktogramm}
\caption{Calculate $\gol_1w,\dotsc,\gol_9w$
         from $w=(b_1^{m_1}\!\dotsc b_k^{m_k})\in\FZ$.}
\label{alg-gol}
\end{figure}

Theorem~\ref{thm-gol} leads us to Algorithm~\ref{alg-gol}.
In order to reduce execution time,
we have applied a few optimisations:
We split up each Vardi polynomial $T_n$
according to~\ref{rmk-Tn}\ref{sub-Tnrest},
extract its common denominator $d_n$, divide by $x_1x_2$
(possible by Corollary~\ref{cor-Tn})
and end up with a primitive polynomial
$\ti T_n := \frac{d_n}{x_1x_2}\cdot T_n\rest{n-1} \in \Z[\thru x1{n-1}]$.
For example,  $d_1=d_2=1$, $\ti T_1 = \ti T_2 = 0$, $d_3=2$
and $\ti T_3=x_1-1$ by~\ref{rmk-Tn}\ref{sub-T0123}.
These decompositions are reassembled in the inner-most loop.
We thereby avoid time-costly calculations with rationals
and earn a speed-up by a factor of about 8 in total.
The two assertions in oval boxes are meant to assist the reader
with verifying that the algorithm still produces
the results as specified and expected.
In Section~\ref{sec-compute}
we will use Algorithm~\ref{alg-gol}
to calculate the first 19 terms of~$\gol(2)=$~\OEIS{014644}.

The functional equation~\ref{prp-Tnhata}
is tightly connected with the anti-involution $w\mapsto w^{-1}$ on $\FZ$.
More functional equations can be gained from the involutions
$-$ and $\bar{\ }\colon\FZ\to\FZ$ given by
\[
  -w := (\mi b_1^{m_1}\!\dotsc\mi b_k^{m_k})
  \quad\text{and}\quad
  \bar w := (b_1^{\mi m_1}\!\dotsc b_k^{\mi m_k})
  \quad\text{for }
  w = (b_1^{m_1}\!\dotsc b_k^{m_k})\in\FZ.
\]
\begin{rmk}\label{rmk-bar}
    The above involutions are group automorphisms of $(\FZ,\cat)$
    and commute with each other, that is,
    \begin{gather*}    
        -(v\cat w)=-v\cat(-w),
        \quad
        \ov{v\cat w}=\bar v\cat \bar w,
        \quad
        \ov{-w}=-\bar w,
        \quad\text{moreover}\\
        \len{-w}=\len w,
        \quad
        \content{-w}=-\content w,
        \quad
        \len{\bar w} = -\len w,
        \quad
        \content{\bar w}=-\content w
    \end{gather*}
    for $v,w\in\FZ$.
\end{rmk}
\begin{proof}
All these identities are obvious.
\end{proof}

\begin{lem}\label{lem-Golinvo}
    Let $z\in\Z$ and $w\in\FZ$.
    Then $\Gol_z(-w) = \ov{\Gol_zw} = -\Gol_{1-z}\bar w$.
\end{lem}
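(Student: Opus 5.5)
The plan is to avoid computing $\Gol_z$ on arbitrary reduced words. Instead I would show that all three expressions, viewed as families of maps indexed by $z$, obey the same multiplicative rule~\eqref{eqn-Golzvw} and agree on the generators $(b)$. Such families must coincide, exactly as in the uniqueness part of the proof of Theorem~\ref{thm-Golz}. So define, for $z\in\Z$ and $w\in\FZ$,
\[
    \Phi_zw := \Gol_z(-w),\qquad
    \Psi_zw := \ov{\Gol_zw},\qquad
    \Omega_zw := -\Gol_{1-z}\bar w .
\]

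\textbf{Step 1 (empty word and generators).} All three families send $()$ to $()$, by~\eqref{eqn-Golz0} and because $-$ and $\bar{\ }$ fix $()$. For a generator $(b)$ with $b\in\Z$:
\[
    \Phi_z(b)=\Gol_z(\mi b)=(z^{\mi b}),\qquad
    \Psi_z(b)=\ov{(z^b)}=(z^{\mi b}).
\]
For the third family, $\bar{(b)}=(b^{\mi1})$, so the inverse rule derived at the start of the proof of Theorem~\ref{thm-Golz} gives $\Gol_{1-z}(b^{\mi1})=((\mi z)^{\mi b})$. Applying $-$ yields $\Omega_z(b)=(z^{\mi b})$. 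Hence all three agree on every generator.

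\textbf{Step 2 (the concatenation rule).} Each family should satisfy $X_z(v\cat w)=X_zv\cat X_{z+\len v}w$. This rests on Remark~\ref{rmk-bar}, which says $-$ and $\bar{\ }$ are automorphisms of $\FZ$ with $\len{-v}=\len v$ and $\len{\bar v}=-\len v$.
\begin{itemize}
\item For $\Phi$, apply~\eqref{eqn-Golzvw} to $(-v)\cat(-w)$ and use $\len{-v}=\len v$.
\item For $\Psi$, apply $\bar{\ }$ to~\eqref{eqn-Golzvw} directly.
\item For $\Omega$, first get $\Gol_{1-z}(\bar v\cat\bar w)=\Gol_{1-z}\bar v\cat\Gol_{1-z-\len v}\bar w$. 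Then observe that $1-z-\len v=1-(z+\len v)$, which is precisely the index required after applying $-$.
\end{itemize}

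\textbf{Step 3 (uniqueness).} Let $X,Y$ be two families satisfying $X_z()=()$ and the rule of Step~2, and agreeing on all generators for all $z$. The rule gives $()=X_z((b)^{-1}\cat(b))=X_z(b^{\mi1})\cat X_{z-1}(b)$, so $X_z(b^{\mi1})=(X_{z-1}(b))^{-1}$, and likewise for $Y$. Thus $X$ and $Y$ also agree on inverses of generators. Every word is a finite product of generators and their inverses, so induction on the number of such factors, using the rule of Step~2, gives $X_z=Y_z$ for all $z$. Applying this to the pairs $(\Phi,\Psi)$ and $(\Psi,\Omega)$ proves the lemma.

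I expect the main obstacle to be the index bookkeeping for $\Omega$: checking that the shift $1-z$ composed with the length reversal of $\bar{\ }$ reproduces $z+\len v$, and getting the generator computation $\Gol_{1-z}(b^{\mi1})=((\mi z)^{\mi b})$ right. Both appear to check out. The rest is formal.
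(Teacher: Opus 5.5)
Your argument is correct, but it takes a different route from the paper. The paper's proof is a direct computation. By rule~\eqref{eqn-Golzbm1k}, each of the three expressions splits into blocks indexed by the reduced form of $w$: the partial lengths of $-w$ equal those of $w$, and those of $\bar w$ are their negatives. Rule~\eqref{eqn-Golzbm}, checked separately for positive and negative exponents, then shows blockwise that $\Gol_{z'}((\mi b)^m)=\ov{\Gol_{z'}(b^m)}=-\Gol_{1-z'}(b^{\mi m})$. You instead show that all three families obey the concatenation rule~\eqref{eqn-Golzvw} and agree on one-letter words. You then re-prove the uniqueness half of Theorem~\ref{thm-Golz} for families with arbitrary prescribed values on generators. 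That re-proof is genuinely needed in your formulation, because your families send $(b)$ to $(z^{\mi b})$ rather than $(z^b)$, so axiom~\eqref{eqn-Golzb} does not apply verbatim. Your route is more conceptual: it uses only the single formula $\Gol_z(b^{\mi1})=((z-1)^{\mi b})$ and Remark~\ref{rmk-bar}, with no case split on the sign of exponents. The paper's route is a quick check once the explicit formulas~\eqref{eqn-Golzbm}--\eqref{eqn-Golzbm1k} are available. Incidentally, you could skip Step~3 by conjugating with $\bar{\ }$. The families $w\mapsto\ov{\Gol_z(-w)}$ and $w\mapsto-\ov{\Gol_{1-z}\bar w}$ satisfy \eqref{eqn-Golz0}--\eqref{eqn-Golzvw} verbatim, so both equal $\Gol_z$ by Theorem~\ref{thm-Golz}.
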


\begin{proof}
    Both equalities follow
    from rules~\eqref{eqn-Golzbm}--\eqref{eqn-Golzbm1k}.
\end{proof}

This lemma leads to three functional equations
\begin{align*}
T_n &= (-1)^{n^2\bmod3}
      T_n(-x_1,x_2,1-x_3,1-x_4,x_5,1-x_6,1-x_7,x_8,1-x_9,\dotsc),\\
T_n &= (-1)^{(n-1)^2\bmod3}
      T_n(x_1,-x_2,x_3,1-x_4,1-x_5,x_6,1-x_7,1-x_8,x_9,\dotsc),\\
T_n &= (-1)^{(n+1)^2\bmod3}
      T_n(-x_1,-x_2,1-x_3,x_4,1-x_5,1-x_6,x_7,1-x_8,1-x_9,\dotsc)
\end{align*}
for each $n\in\N$.
When combined with Proposition~\ref{prp-Tnhata},
they entail a total of eight functional equations
(including the trivial equation $T_n=T_n$),
which involve an obvious action of the group
$(\Z/2\Z)^3$ on the arguments of $T_n$,
that is, on the set $\Z^\N$.
This group action reflects the action on~$\FZ$
by its group of (anti-)involutions,
which is generated by the two involutions $-$,~$\bar{\ }$
and the anti-involution ${}^{-1}\colon\FZ\to\FZ$
and is isomorphic to $(\Z/2\Z)^3$ as well.
We will not make use of all this and leave
the explicit statements and proofs as an exercise.
Let us now extend the anti-involution $\rev{\ }$
introduced on the monoid $(\Z^{(\N)},\cat)$ in Section~\ref{sec-intseq}
to the group $\FZ$
by setting
\[
    \rev w := \bar w^{-1} = (b_k^{m_k}\!\dotsc b_1^{m_1})
    \quad \text{for} \quad
    w = (b_1^{m_1}\!\dotsc b_k^{m_k}) \in\FZ,
\]
and write down its elementary properties
(which follow from its definition or from~\ref{rmk-bar}):

\begin{rmk}\label{rmk-rev}
    For $v,w\in\FZ$
    we have
    $\longrev{v\cat w} = \rev w \cat \rev v$,
    $\len{\rev w} = \len w$ and
    $\content{\rev w} = \content w$.
\end{rmk}

For given $z\in\Z$, we define the operator
$\Lev_z \colon \FZ \to \FZ$, $w \mapsto \longrev{\Gol_zw}$.
Then $\Lev_1\rest{\N_0^{(\N)}} = \Lev$,
so that we may extend the Levine operator from Section~\ref{sec-intseq}
by setting
$L:=L_1\colon\FZ\to\FZ$ and call
\[
    \lev_nw := \len{\Lev^{n-1}\!w}
    \text{ for } n\in\N \quad\text{and}\quad
    \lev w := (\lev_nw)_{n\in\N}
\]
the \df{$n$-th Levine number} and the \df{Levine sequence}
based at the word $w$,
in natural generalisation of definition~\eqref{eqn-levna}.
The following theorem presents a recipe for calculating
these Levine numbers.

\begin{thm}\label{thm-lev}
    Let $w=(b_1^{m_1}\dots b_k^{m_k}) \in\FZ$ with $k\in\N_0$ and
    $\thru m1k, \thru b1k \in \Z$.
    For $n\in\N$ and $j\in\enum1k$, we recursively  calculate
    the integers
    \[
        l_{n,0} := 0,\quad
        l_{n,j} := l_{n,j-1} + (-1)^n
        T_n( -m_j, -b_j, -l_{1,j-1}, l_{2,j-1}-l_{2,k},
                         -l_{3,j-1}, l_{4,j-1}-l_{4,k},\dotsc).
    \]
    Then $\lev_nw = l_{n,k}$ for every $n\in\N$.
\end{thm}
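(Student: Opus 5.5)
The plan is to reduce the Levine iteration to an iteration of golombic operators $\Gol_z$ with varying indices, and then to read off lengths with Lemma~\ref{lem-Golanvw} and Vardi's functionals $T_n$, exactly as in the proof of Theorem~\ref{thm-gol}. The three ingredients are Lemma~\ref{lem-Golinvo}, Corollary~\ref{cor-Golw}\ref{sub-Golw-1} and Remark~\ref{rmk-bar}, used to rewrite a single application of $\Lev_z$.

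\textbf{Step 1: one application of $\Lev_z$.} Using $\rev u=\bar u^{-1}$, I compute
\[
\Lev_z w=\bigl(\ov{\Gol_z w}\bigr)^{-1}=\bigl(\Gol_z(-w)\bigr)^{-1}=\Gol_{z+\len w}\bigl((-w)^{-1}\bigr).
\]
So $\Lev_z$ equals $\Gol_{z+\len w}$ precomposed with the anti-involution $w\mapsto(-w)^{-1}$. I would check this formula on the example $\Lev(2)=(1,1)$.

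\textbf{Step 2: the iterated formula.} I then iterate this and push all the (anti-)involutions through the $\Gol$'s. For this I use $\Gol_z(-u)=\ov{\Gol_z u}$ and $\Gol_z\bar u=-\Gol_{1-z}u$ from Lemma~\ref{lem-Golinvo}, together with Corollary~\ref{cor-Golw}\ref{sub-Golw-1}. The goal is a formula
\[
\Lev^n w=\tau_n\bigl(\Gol^n_{c}\,\sigma(w)\bigr),
\]
valid up to the anti-involution that reverses concatenation order. Here $\tau_n$ is one of the eight (anti-)involutions, determined by $n$ modulo a small period, and $\sigma(w)$ is a fixed twist of $w$ such as $(-m_j,-b_j)$ letterwise. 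The index sequence $c=(c_1,\dots,c_n)$ is built from $1$ and the lengths $\lev_i w$ with signs.

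The anti-involutions reverse concatenation order. Consequently, at every other level the image of a prefix of $w$ sits at the \emph{end} of the word rather than at the beginning. Its starting index is then the total length minus the length already consumed, which is exactly where the terms $l_{2i,j-1}-l_{2i,k}$ (even positions) versus $-l_{2i+1,j-1}$ (odd positions) in the statement come from.

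\textbf{Step 3: splitting off the last letter.} I write $w_j=(b_1^{m_1}\dots b_j^{m_j})=w_{j-1}\cat(b_j^{m_j})$ and apply Lemma~\ref{lem-Golanvw} to the representation from Step 2, with $v=\sigma(w_{j-1})$ and final piece $\sigma(b_j^{m_j})$. The length of the $n$-th image splits as the contribution of $w_{j-1}$ plus $\len{\Gol^n_{c'}(\cdot)}$ for the last letter. By Corollary~\ref{cor-Golw}\ref{sub-lenGolzw} together with the identity $\Gol_{(m,b)}^2(1)=\Gol_b(m)=(b^m)$ used in Theorem~\ref{thm-gol}, this second term equals $\pm T_n$ evaluated at $(-m_j,-b_j,\dots)$. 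The remaining arguments are the shifted indices $c'_i$, namely the partial lengths $\pm l_{i,j-1}$, offset by the totals $l_{i,k}$ at even levels. The overall sign $(-1)^n$ comes from $\len{\bar u}=-\len u$, since a bar is applied an alternating number of times. Summing over $j$ gives the recursion for $l_{n,j}$, and $l_{n,k}=\lev_n w$.

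I expect Step 2 to be the main obstacle: determining precisely which involution $\tau_n$ appears and how the constant $1$ and the shifts $1-z$ from Lemma~\ref{lem-Golinvo} combine into exactly the argument list of the theorem. I would first work out $n\le 3$ by hand against Levine's triangle~\eqref{eqn-lev2triangle}, guess the pattern for $\tau_n$ and $c$, and then prove it by induction on $n$.
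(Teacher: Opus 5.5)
Your strategy is in substance the paper's. You rewrite $\Lev^n$ as a plain iterate $\Gol_c^n$ of a twisted word, dressed by an (anti-)involution, then cut the word letter by letter with Lemma~\ref{lem-Golanvw} and read off $T_n$ via $\Gol^2_{(m,b)}(1)=(b^m)$. Your Step~1 is correct. But Step~2, where the real content of the theorem lies, is not carried out. You do not pin down $\tau_n$ or $c$ (and $\sigma$ only tentatively), and \qt{guess the pattern from $n\le3$, then induct} is not a proof.

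Moreover, not every correct normal form will do. Pushing the anti-involution of Step~1 through $\Gol_{1+\len w}$ gives the equally true $\Lev^2w=\Gol_{1+\lev_2w}\Gol_0\bar w$. Its letterwise splitting yields $T_n(-m_j,b_j,\dots)$ rather than the theorem's arguments, and reconciling the two would need the functional equations the paper leaves unproved.

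The identity you need is dictated by the theorem's argument list: with $c:=(0,-\lev_2w,0,-\lev_4w,0,\dotsc)$,
\[
  \Gol_c^n(-\bar w)=-\ov{\Lev^nw}\ \text{ for even } n,
  \qquad
  \Gol_c^n(-\bar w)=-\longrev{\Lev^nw}\ \text{ for odd } n .
\]
It follows by induction on $n$ from $\Gol_z\,\ov{-v}=-\Gol_{1-z}v$, which is Lemma~\ref{lem-Golinvo} applied to $\bar v$.
\begin{itemize}
\item For odd $n$, take $z=0$ and $v=\Lev^{n-1}w$; this gives $-\Gol_1\Lev^{n-1}w=-\longrev{\Lev^nw}$.
\item For even $n$, put $u=\Lev^{n-1}w$ and write $-\longrev{u}=\ov{-u^{-1}}$, which gives $-\Gol_{1+\lev_nw}(u^{-1})$. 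Since $\len u=\lev_nw$, Corollary~\ref{cor-Golw}\ref{sub-Golw-1} turns this into $-(\Gol_1u)^{-1}=-\ov{\Lev^nw}$.
\end{itemize}

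With this identity in hand, Step~3 works after two repairs.

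First, there is an index shift. The piece of the $j$-th letter in the $(n-1)$-th image $\Gol_c^{n-1}(-\bar w)$ is $\Gol^{n+1}_a(1)$ with $a=(-m_j,-b_j,\dotsc)$, and its length is $T_na$. The $n$-th image would give $T_{n+1}$, not $T_n$.

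Second, your $c$ contains the true numbers $\lev_iw$, whereas the theorem's recursion contains the $l_{i,k}$ it defines itself. Set $\lambda_{n,j}:=(-1)^n\len{\Gol_c^{n-1}(-\ov{w_j})}$. Lemma~\ref{lem-Golanvw} gives the theorem's recursion for $\lambda$, with $\lambda_{i,k}=\lev_iw$ in place of $l_{i,k}$. Besides $l_{n,j-1}$, the quantity $l_{n,j}$ only involves $l_{i,\cdot}$ with $i\le n-2$, so induction on $n$ and then on $j$ gives $\lambda=l$.

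The paper merges your Steps~2 and~3. It defines the pieces $u_{n-2,j}:=\Gol^n_{a_j}(1)$ directly from the $l$'s, and proves the displayed identity for their concatenation jointly with $\len{\Lev^nw}=l_{n+1,k}$. Your separation into a closed formula plus a splitting argument is equally valid, and even keeps Step~2 free of the length bookkeeping, but only once the displayed identity is actually stated and proved.
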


Theorem~\ref{thm-lev} gives rise to Algorithm~\ref{alg-lev}
which calculates the first nine Levine numbers of a given word
at the price of its second, fourth and sixth Levine number.
Other than that, we use the same decompositions
and reassembly for our speed-up as in the golombic case.
Note that the tuple variable $s$ holds the numbers $l_{n,j}$
multiplied with the extra sign factor $(-1)^n$,
which is stripped off when returning the result.
Again, the two assertions in oval boxes
are meant to aid the reader in verifying the algorithm.
In Section~\ref{sec-compute} we will use Algorithm~\ref{alg-lev}
to calculate the first 20, 19 and 18 terms
of the three Levine sequences
$\lev(2)$, $\lev(0,0,1)$ and $\lev(0,2)$,
respectively.
\newpage

\begin{figure}[!ht]\centering
\renewcommand{\figurename}{Algorithm}
\begin{struktogramm}(128,103)[{\tt levine\_numbers}$(b_1^{m_1}\!\dotsc b_k^{m_k})$]
\assign{$h \ot (0,\lev_2w,0,\lev_4w,0,\lev_6w)$}
\assign{$s \ot ()$}
\forallin{{\tt for\ }$n \ot 1,\dotsc,9${\tt\ do}\hfill{\em\# we only have $\thru T19$}}
\assign{\parbox{16em}{Pick $d_n\in\N$ minimal with\\
              $\ti T_n \ot \frac{d_n}{x_1\cdot x_2}\cdot T_n\rest{n-1}\in\Z[\bx]$}}
\forallinend
\forallin{{\tt for\ }$j \ot 1,\dotsc,k${\tt\ do}}
\assign{$\delta \ot 1$}
\assign{$a \ot (-m_j,-b_j) \cat (s-h)$}
\forallin{{\tt for\ }$n \ot 1,\dotsc,9${\tt\ do}}
\assign{$\delta \ot \frac{m_jb_j\cdot\ti T_na}{d_n} + a(n)\cdot\delta$}
\assert{At this point we have $\delta = T_na$.}
\assign{$s(n) \ot  s(n) + \delta$}
\assert{Now we have $s(n) = (-1)^n l_{n,j}$.}
\forallinend
\forallinend
\return{{\tt return }$((-1)^ns(n))_{n=1,\dotsc,9}$}
\end{struktogramm}
\caption{Calculate $\lev_1w,\dotsc,\lev_9w$
         from $w=(b_1^{m_1}\!\dotsc b_k^{m_k})\in\FZ$
         and $\lev_2w$, $\lev_4w$, $\lev_6w$.}
\label{alg-lev}
\end{figure}

We will now proceed to proving Theorem~\ref{thm-lev}.

\begin{proof}
Let $j\in\enum1k$.
For $n\in\N$, we abbreviate
\[
    a_j := ( -m_j, -b_j, -l_{1,j-1}, l_{2,j-1}-l_{2,k},
                         -l_{3,j-1}, l_{4,j-1}-l_{4,k},\dotsc),
    \quad
    u_{n-2,j} := \Gol_{a_j}^n(1)
\]
and obtain $\content{u_{n-2,j}} = T_na_j$, thus
\begin{gather}\label{eqn-lu1j}
    (-1)^nl_{n,j} = \content{u_{n-2,1} \cat\dotsm\cat u_{n-2,j}}
                  = \len{u_{n-1,1} \cat\dotsm\cat u_{n-1,j}}
\end{gather}
from Corollary~\ref{cor-Golw}\ref{sub-lenGolzw}
and the assumptions of the theorem.
We want to prove by induction that
\begin{gather}\label{eqn-u1kLnw}
u_{n,1} \cat\dotsm\cat u_{n,k} = \begin{cases}
-\ov{\Lev^nw} & \text{if $n$ is even},\\
-\longrev{\Lev^nw} & \text{if $n$ is odd}
\end{cases}
\end{gather}
holds for all $n\in\N_0$.
We start by observing that, for each $n\in\N_0$,
\eqref{eqn-u1kLnw} implies
\begin{gather}\label{eqn-lenLevnw}
  \len{\Lev^nw=l_{n+1,k}}
\end{gather}
according to~\eqref{eqn-lu1j}, Remarks~\ref{rmk-bar} and~\ref{rmk-rev}.
Furthermore, $u_{0,j}=\Gol_{a_j}^2(1)=(\mi b_j^{\mi m_j})$ entails
$u_{0,1} \cat\dotsm\cat u_{0,k} = -\bar w = -\ov{L^0w}$,
settling assertion~\eqref{eqn-u1kLnw},
and thereby also~\eqref{eqn-lenLevnw}, for $n=0$.

Now let $n\in\N$.
If $n$ is odd, then
$u_{n,j} = \Gol_{-l_{n,j-1}}u_{n-1,j}$, hence
\[
  u_{n,1} \cat\dotsm\cat u_{n,k}
  = \Gol_0(u_{n-1,1} \cat\dotsm\cat u_{n-1,k})
  = \Gol_0\ov{-\Lev^{n-1}w}
  = -\Gol_1\Lev^{n-1}w
  = -\longrev{\Lev^nw}
\]
by~\eqref{eqn-Golzvw}, \eqref{eqn-lu1j},
induction hypothesis~\eqref{eqn-u1kLnw},
Lemma~\ref{lem-Golinvo} and the definition of $\Lev$.

If $n$ is even, then
$u_{n,j} = \Gol_{l_{n,j-1}-l_{n,k}}u_{n-1,j}$, hence
\begin{align*}
  u_{n,1} \cat\dotsm\cat u_{n,k}
  &= \Gol_{-l_{n,k}}(u_{n-1,1} \cat\dotsm\cat u_{n-1,k})
   = \Gol_{-l_{n,k}}\longrev{-\Lev^{n-1}w}
   = \Gol_{-l_{n,k}}\ov{-\Lev^{n-1}w}^{-1}\\
  &= -\Gol_{1+l_{n,k}}(\Lev^{n-1}w)^{-1}
   = -(\Gol_1\Lev^{n-1}w)^{-1}
   = -{\longrev{\Lev^nw}}^{-1}
   = -\ov{\Lev^nw}
\end{align*}
by~\eqref{eqn-Golzvw}, \eqref{eqn-lu1j},
induction hypotheses~\eqref{eqn-u1kLnw}, \eqref{eqn-lenLevnw},
Corollary~\ref{cor-Golw}\ref{sub-Golw-1},
Lemma~\ref{lem-Golinvo} and the definition of $\Lev$.
We have thus proved equation~\eqref{eqn-lenLevnw}
for all $n\in\N_0$,
which is the assertion of the theorem.
\end{proof}

\section{Actual computations}\label{sec-compute}

We will now use our algebraic theory
to compute large terms of some golombic and Levine sequences in the OEIS.
To that end we have implemented Algorithms~\ref{alg-gol} and~\ref{alg-lev}
in PARI~\cite{PARI}.

\begin{table}[!ht]
\centering
\renewcommand{\arraystretch}{0.5}
\begin{tabular}{|r|r|}
\hline
\small$n$  &\multicolumn{1}{c|}{\small$\gol_n(2)$}\\
\hline
\tn{1} & \tn{1}\\
\tn{2} & \tn{2}\\
\tn{3} & \tn{2}\\
\tn{4} & \tn{3}\\
\tn{5} & \tn{5}\\
\tn{6} & \tn{11}\\
\tn{7} & \tn{38}\\
\tn{8} & \tn{272}\\
\tn{9} & \tn{6474}\\
\tn{10} & \tn{1090483}\\
\tn{11} & \tn{4363282578}\\
\tn{12} & \tn{2940715000315189}\\
\tn{13} & \tn{7930047000157075949085439}\\
\tn{14} & \tn{14412592242471457956514645440241289655074}\\
\tn{15} & \tn{70636608026754077888330819116433040562%
              582634705380432362008848092}\\[0.3ex]
\tn{16} & \tn{62919380747847674923268444538883081290%
              15560863748318746359641092668720877652%
              23798623957170361696051714808}\\
\shortstack[r]{\ \\[0.3ex]\tn{17}\\[0ex]}
& \shortstack[l]{%
          \tn{274679710774467801558637071964120644%
              623371548235513562173560586253899638%
              892063967223817009010426152743428280}\\[-.2ex]
      \tn{95015228805612576584237742560138241905230925658418113598362450}}\\[0.3ex]
\shortstack[r]{\tn{18}\\[0.4ex]}
& \shortstack[l]{%
          \tn{1068128249654676059545859772511092830%
              2862782238991152779159513586521119002%
              6888086973173355550098402002269620478}\\[-.2ex]
          \tn{9118657910813125744810794765768812113%
              6855941878034817237323089025802952868%
              9264337149196604264834413862430830118}\\[-.2ex]
          \tn{92049129175381484051105807121010552592345789589177244}}\\[0.4ex]
\shortstack[r]{\tn{19}\\[1ex]}
& \shortstack[l]{%
          \tn{18132694087736252753825746762353797346%
              19275691193110572504441448757346591432%
              78273051378468643572659960483509344293}\\[-.2ex]
          \tn{92650305504635411522647472146929043966%
              38434076992830632424546405639737791621%
              19665115691428170790457955121041161219}\\[-.2ex]
          \tn{23792554965830233832467902012860708233%
              74889716213602135071359209036524798037%
              60317412179872022642510942471562312220}\\[-.2ex]
          \tn{92230645474866085358642626579262442931%
              79785271338594333828913921905524801116%
              72830592742834237563852836}}\\
\hline
\end{tabular}
\caption{first 19 terms of $\gol(2)=$ \OEIS{014644}}
\label{tab-golombic2}
\end{table}

We start with the golombic sequence $\gol(2)=\OEIS{014644}$
mentioned in our introduction and displayed as~\eqref{eqn-gol2}
in Section~\ref{sec-intseq}.
Its first fifteen terms are already known.
Feeding the word $w:=\Gol^{10}(2) = (1^1 2^2 \!\dotsc 1090483^{6474})$
to Algorithm~\ref{alg-gol} produces the golombic numbers
$\gol_1w=\gol_{11}(2),\dotsc,\gol_9w=\gol_{19}(2)$
displayed in Table~\ref{tab-golombic2}.
Other golombic sequences, for example $\gol(3)$
(or even $\gol(-3)$, which has positive and negative terms)
could be easily computed and added to the OEIS if desired.

\begin{table}[!ht]
\centering
\renewcommand{\arraystretch}{0.5}
\begin{tabular}{|r|r|}
\hline
\small$n$  &\multicolumn{1}{c|}{\small$\lev_n(2)$}\\
\hline
\tn{1} & \tn{1}\\
\tn{2} & \tn{2}\\
\tn{3} & \tn{2}\\
\tn{4} & \tn{3}\\
\tn{5} & \tn{4}\\
\tn{6} & \tn{7}\\
\tn{7} & \tn{14}\\
\tn{8} & \tn{42}\\
\tn{9} & \tn{213}\\
\tn{10} & \tn{2837}\\
\tn{11} & \tn{175450}\\
\tn{12} & \tn{139759600}\\
\tn{13} & \tn{6837625106787}\\
\tn{14} & \tn{266437144916648607844}\\
\tn{15} & \tn{508009471379488821444261986503540}\\
\tn{16} &
  \tn{37745517525533091954736701257541238885239740313139682}\\
\tn{17} &
  \tn{534742638381269723378613957622045014225037327749913%
      0252554080838158299886992660750432}\\
\shortstack[r]{\ \\[0.3ex]\tn{18}\\[0ex]}
& \shortstack[l]{%
  \tn{562886959965089436255276483697229698570162841824809622582532%
      6053811392335926611739231671481093004498028362253089934588390}\\[-.2ex]
  \tn{3443973143886461}
  }\\[0ex]
\shortstack[r]{\ \\[0.3ex]\tn{19}\\[0ex]}
& \shortstack[l]{%
  \tn{8394177266373517316056054367253472668387345374746259369127854%
      4525723285290023673872585715830432071384827472565652426695269}\\[-.2ex]
  \tn{7247104588082417791326567485011836725440062543774312172177629%
      64060736471826937656819379445242826439}
   }\\[.4ex]
\shortstack[r]{\tn{20}\\[0.4ex]}
& \shortstack[l]{%
  \tn{13176858547072653394798357452304773863137353203466562795709626%
      36621730054681161341912647579190214493114252604225721726187465}\\[-.2ex]      
  \tn{27090032696189477749573760519660979835010392974960148241997061%
      14081418515499154853768426703009531845240324456626705643814014}\\[-.2ex]
  \tn{49267858165631589878586040172951626441216996746793774353710261%
      882069842922084089160802454747060478632732814946}
}\\
\hline
\end{tabular}
\caption{first twenty terms of $\lev(2) = \OEIS{011784}$}
\label{tab-levine2}
\end{table}

Let us now focus on the Levine sequences in the OEIS,
most importantly Levine's original sequence
$\lev(2)=\OEIS{011784}$.
We mentioned it in our introduction,
as it has been our main motivation for this whole research project.
Let us illustrate how we compute all our Levine numbers from scratch:
We begin with calculating $\lev_1(2),\dotsc,\lev_8(2)$
from~\eqref{eqn-lev2triangle} and~\eqref{eqn-levna}
by hand.
Next we apply Algorithm~\ref{alg-lev} to the words
$w_2:=L^2(2)=(2^1 1^1)$, $w_5:=L^5(2)=(4^1 3^1 2^2 1^3)$, 
$w_8:=L^8(2)=(42^1\!\dotsc 2^{13} 1^{14})$ and 
$w_{11}:=L^{11}(2)=(175450^1\!\dotsc1^{2837})$
one after the other,
each time feeding the Levine numbers returned
by the previous call to the next call.
Thereby, we first produce
$\lev_1w_2=\lev_3(2),\dotsc,\lev_9w_2=\lev_{11}(2)=\lev_6w_5$,
then
$\lev_1w_5=\lev_6(2),\dotsc,\lev_9w_5=\lev_{14}(2)=\lev_6w_8$,
then
$\lev_1w_8=\lev_9(2),\dotsc,\lev_9w_8=\lev_{17}(2)=\lev_6w_{11}$,
and then finally
$\lev_1w_{11}=\lev_{12}(2),\dotsc,\lev_9w_{11}=\lev_{20}(2)$.
In this way, we obtain the first twenty 
terms of Levine's original sequence  $\lev(2)=\OEIS{011784}$
within two hours.

\begin{table}[!ht]
\centering
\renewcommand{\arraystretch}{0.5}
\begin{tabular}{|r|r|}
\hline
\small$n$  &\multicolumn{1}{c|}{\small$\lev_n(0,0,1)$}\\
\hline
\tn{1} & \tn{3}\\
\tn{2} & \tn{1}\\
\tn{3} & \tn{3}\\
\tn{4} & \tn{3}\\
\tn{5} & \tn{6}\\
\tn{6} & \tn{10}\\
\tn{7} & \tn{28}\\
\tn{8} & \tn{108}\\
\tn{9} & \tn{1011}\\
\tn{10} & \tn{32511}\\
\tn{11} & \tn{9314238}\\
\tn{12} & \tn{84560776390}\\
\tn{13} & \tn{219625370880235960}\\
\tn{14} & \tn{5178941522681382123892005221}\\
\tn{15} & \tn{317195599240175645015464306479382985752031865}\\
\tn{16} &
  \tn{458118706320594776183599743881383842326646671002717%
      727944161269026105841}\\
\tn{17} &
  \tn{40524423106475362131212671577710438781983672527594689540186%
      481007941224989794967558352011528561939344387386361918024}\\
\shortstack[r]{\ \\[0.3ex]\tn{18}\\[0ex]}
& \shortstack[l]{%
  \tn{517734706951251237535645078871725288172010082560578109830724%
      732757803546151844793635698385556746175450470169068259400348}\\[-.2ex]
  \tn{809846908421331320764244898886841226930736565024659040954172%
      9350948}
  }\\[0.4ex]
\shortstack[r]{\ \\[0.3ex]\tn{19}\\[0.45ex]}
& \shortstack[l]{%
  \tn{5851089907854437913711197398216048423981863081907791146594428%
      0902674553085487990182624491947708226693686304248007269656384}\\[-.2ex]
  \tn{4454389594821366538242678424804173898851358613606640376458206%
      1629553025446916407883007783604880422644051997090474874507220}\\[-.2ex]
  \tn{4122425938292829190234648392016115743586341696466638099854}
   }\\
\hline
\end{tabular}
\caption{first 19 terms of Levine sequence $\lev(0,0,1)$
         (cf.~\OEIS{061892})}
\label{tab-levine001}
\end{table}

Along the same lines we compute
the first nineteen terms of the Levine sequence $\lev(0,0,1)$,
which is found in the OEIS with an extra zeroth term as~\OEIS{061892},
and the first eighteen terms of the Levine sequence $\lev(0,2)$,
which corresponds to \OEIS{061894}.
All those Levine numbers
are displayed in Table~\ref{tab-levine001}
and in Table~\ref{tab-levine02} respectively.

\begin{table}[!ht]
\centering
\renewcommand{\arraystretch}{0.5}
\begin{tabular}{|r|r|}
\hline
\small$n$  &\multicolumn{1}{c|}{\small$\lev_n(0,2)$}\\
\hline
\tn{1} & \tn{2}\\
\tn{2} & \tn{2}\\
\tn{3} & \tn{4}\\
\tn{4} & \tn{6}\\
\tn{5} & \tn{13}\\
\tn{6} & \tn{35}\\
\tn{7} & \tn{171}\\
\tn{8} & \tn{1934}\\
\tn{9} & \tn{97151}\\
\tn{10} & \tn{52942129}\\
\tn{11} & \tn{1435382350480}\\
\tn{12} & \tn{21191828466255176653}\\
\tn{13} & \tn{8482726531439110654657256441218}\\
\tn{14} & \tn{50131800300416773319763186119561362369281827059942}\\
\tn{15} &
  \tn{118593237444245044162979641011632199299775500898191%
      134931490487767364941673215121}\\
\shortstack[r]{\ \\[0.3ex]\tn{16}\\[0ex]}
& \shortstack[l]{%
  \tn{16580038213765946158970490735716854098151679318563200923947%
      35105253090963710161624288562839274938016433284875404831152}\\[-.2ex]
  \tn{497013018105}
  }\\[0.4ex]
\shortstack[r]{\ \\[0.3ex]\tn{17}\\[0.45ex]}
& \shortstack[l]{%
  \tn{548350247347858284424735841286598432135554883736080031677080%
      628586300038196498320973659796091956479764217227046839806477}\\[-.2ex]
  \tn{827739983749817030539482371451432934487316574132306048207393%
      59759059631007625156541555199}\\[-.2ex]
   }\\
\shortstack[r]{\ \\[0.3ex]\tn{18}\\[0.45ex]}
& \shortstack[l]{%
  \tn{2535457280728691927642079988404251435798134843896320193096475%
      4688452974597655028851506417116883244713034524631462612033806}\\[-.2ex]
  \tn{6989405655081689947162822110618637554083688246295576367009862%
      2518268788598523771527763241733406357760755761701436833329021}\\[-.2ex]
  \tn{6006596822290193802143301167039527782999078163517707768366511%
      964728681044585582158791222111730}
   }\\
\hline
\end{tabular}
\caption{first 18 terms of Levine sequence $\lev(0,2)$
         (cf.~\OEIS{061894})}
\label{tab-levine02}
\end{table}

The reader might notice that the OEIS Names
for the latter two Levine sequences
seem to use different base tuples.
This is because
the OEIS entries for all Levine sequences
reverse each tuple in the Levine triangle
{\em before} applying the golombic operator $\Gol$,
while our Levine operator $L$ first applies $\Gol$
and {\em then} reverses the resulting tuple.
Remember that we chose this standpoint
because it allows us to express the Levine operator on $\N^{(\N_0)}$
in terms of integer partitions and their conjugates.
Other Levine sequences, for example $\lev(1,2)$
(or even $\lev(-4)$, which has positive and negative terms)
could be easily computed and added to the OEIS if desired.

\section{Loose ends}\label{sec-loose}

It turns out that $\gol a$ and $\lev a$ are unbounded
for every tuple $a\in\N_0^{(\N)}$ other than~$()$ or~$(1)$.
On the other hand, many words are taken to the empty word
by the golombic or Levine operator after a few iterations.
For example, the sequences
\begin{align*}
    \gol(5^{\mi1} \mi4^1) &= (0,-9),\\
    \gol(\mi2^1) &= (1,-2,-2,1,-1,-1),\\
    \gol(\mi2^1 3^1 \mi1^1) &= (3,0,1,-2,-2,2,-1,-2),\\
    \lev(\mi2^1) &= (1,-2,-1,1),\\
    \lev(\mi3^1) &= (1,-3,-3,3,-1,-1),\\
    \lev(1^{\mi3}2^2) &= (-1,1,-3,-6,4,-3,-4,3,-1,-1)
\end{align*}
are all finite.
It would be nice to characterise the words that exhibit such a behaviour.
Even more interesting is the question whether there are words whose
Levine or golombic sequence is bounded {\em and} infinite.
We suspect that such a sequence, if existent at all,
would have to become periodic.

Let us call a word $w$ given in reduced form
$w=(b_1^{m_1}\dots b_k^{m_k})$
\df{homogeneous}
if $\{\thru b1k\}$ or $\{\thru{-b}1k\}$
is a subset of $\N_0$
and if either $\{\thru m1k\}$ or $\{\thru{-m}1k\}$
is a subset of $\N$.
If $w$ is homogeneous,
then $\Gol w$ and $\Lev w$
are obviously homogeneous as well.
Concerning the asymptotics of an unbounded
golombic or Levine sequence
based at a homogenous word,
two constants seem to play a role:
the golden ratio $\ph=\frac12(1+\sqrt5)=$ \OEIS{001622}
and Mallows's constant $\kappa=$ \OEIS{369988} $\approx0.278877061$
established in~\cite{MiSa,Mi2,Mi1}.
Part~\ref{sub-limlev} of the following conjecture
generalises a hypothesis expressed by Mallows in~\cite{MaPoSl}.

\begin{cnj}\label{cnj-lim}
Let $w$ be a homogeneous word.
Then
\begin{substate}
\item\label{sub-limgol}
    the golombic sequence $\gol w$ is either bounded or
    $\lim\limits_{n\to\infty}
     \frac{\gol_{n+1}w}{\gol_nw\,\cdot\,\gol_{n-1}w}
     = \ph-1$,
\item\label{sub-limlev}
    the Levine sequence $\lev w$ is either bounded or
    $\lim\limits_{n\to\infty}
     \frac{\lev_{n+1}w}{\lev_nw\,\cdot\,\lev_{n-1}w}
     = \kappa$.
\end{substate}
\end{cnj}

Let us now take a more algebraic look
at the golombic and Levine numbers.
Meditating on Theorems~\ref{thm-gol} and~\ref{thm-lev}
for a moment reveals that
$\gol_n(b_1^{m_1}\dots b_k^{m_k})$
and $\lev_n(b_1^{m_1}\dots b_k^{m_k})$
are integer-valued polynomials in $\thru m1k$, $\thru b1k$
for every fixed $k\in\N$.
Their degrees in these \qt{variables}
seem to be the Fibonacci numbers
\[
    (F_0,F_1,F_2,\dotsc)=\OEIS{000045}=(0,1,1,2,3,5,8,13,21,\dotsc).
\]

\begin{cnj}
For $j,k,n\in\N$ with $j\leq k$,
both polynomial expressions $\gol_n(b_1^{m_1}\dots b_k^{m_k})$
and $\lev_n(b_1^{m_1}\dots b_k^{m_k})$
have degree $F_n$ in $m_j$ and degree $F_{n-1}$ in $b_j$.
\end{cnj}

\begin{OEISentries}

\oeitem{000045} Fibonacci numbers: F(n) = F(n-1) + F(n-2)
    with F(0) = 0 and F(1) = 1.
    
\oeitem{001462} Golomb's sequence:
    a(n) is the number of times n occurs, starting with a(1) = 1.

\oeitem{001463} Partial sums of \OEIS{001462};
    also a(n) is the last occurrence of n in \OEIS{001462}.

\oeitem{001622} 	Decimal expansion of golden ratio
  phi (or tau) = (1 + sqrt(5))/2.

\oeitem{008275} 	Triangle read by rows of Stirling numbers of first kind,
  s(n,k), n \>= 1, 1 \<= k \<= n.

\oeitem{008277} Triangle of Stirling numbers of the second kind,
  S2(n,k), n \>= 1, 1 \<= k \<= n.

\oeitem{011784} Levine's sequence.
  First construct a triangle as follows. Row 1 is \{1,1\};
  if row n is \{r\_1, ..., r\_k\} then row n+1 consists of
  \{r\_k 1's, r\_\{k-1\} 2's, r\_\{k-2\} 3's, etc.\};
  sequence consists of the final elements in each row.

\oeitem{012257} Irregular triangle read by rows:
  row 0 is \{2\}; if row n is \{r\_1, ..., r\_k\}
  then row n+1 is
  \{r\_k 1's, r\_\{k-1\} 2's, r\_\{k-2\} 3's, etc.\}.

\oeitem{014643} Triangular array starting with \{1,1\};
    then i-th term in a row gives number of i's in next row.

\oeitem{014644} Form array starting with \{1,1\};
    then i-th term in a row gives number of i's in next row;
    sequence is formed from final term in each row.

\oeitem{061892} Lionel-Levine-sequence generated by (1,0,0).

\oeitem{061894} Lionel-Levine-sequence generated by (2,0).

\oeitem{369988} Decimal expansion of Mallows's constant
  or stribolic constant kappa (of order~1).

\end{OEISentries}

\end{document}